\pdfoutput=1
\RequirePackage{ifpdf}
\ifpdf 
\documentclass[pdftex]{sigma}
\else
\documentclass{sigma}
\fi

\numberwithin{equation}{section}

\newtheorem{Theorem}{Theorem}[section]
\newtheorem*{Theorem*}{Theorem}
\newtheorem{Corollary}[Theorem]{Corollary}

\newtheorem{Proposition}[Theorem]{Proposition}
 { \theoremstyle{definition}

\newtheorem{Remark}[Theorem]{Remark} }

\newcommand*{\R}{\mathbb{R}}
\newcommand*{\C}{\mathbb{C}}
\newcommand*{\Q}{\mathbb{Q}}
\renewcommand*{\H}{\mathbb{H}}
\newcommand*{\Z}{\mathbb{Z}}
\newcommand*{\N}{\mathbb{N}}

\DeclareMathOperator{\PSL}{PSL}

\begin{document}

\allowdisplaybreaks

\newcommand{\arXivNumber}{2308.06158}

\renewcommand{\PaperNumber}{053}

\FirstPageHeading

\ShortArticleName{Infinitesimal Modular Group: $q$-Deformed $\mathfrak{sl}_2$ and Witt Algebra}

\ArticleName{Infinitesimal Modular Group:\\ $\boldsymbol{q}$-Deformed $\boldsymbol{\mathfrak{sl}_2}$ and Witt Algebra}

\Author{Alexander THOMAS}

\AuthorNameForHeading{A.~Thomas}

\Address{Universit\"at Heidelberg, Berliner Str. 41-49, 69120 Heidelberg, Germany}
\Email{\href{mailto:athomas@mathi.uni-heidelberg.de}{athomas@mathi.uni-heidelberg.de}}
\URLaddress{\url{https://thomas-math.wixsite.com/maths/en}}

\ArticleDates{Received December 01, 2023, in final form June 03, 2024; Published online June 20, 2024}

\Abstract{We describe new $q$-deformations of the 3-dimensional Heisenberg algebra, the simple Lie algebra $\mathfrak{sl}_2$ and the Witt algebra. They are constructed through a realization as differential operators. These operators are related to the modular group and $q$-deformed rational numbers defined by Morier-Genoud and Ovsienko and lead to $q$-deformed M\"obius transformations acting on the hyperbolic plane.}

\Keywords{quantum algebra; Lie algebra deformations; $q$-Virasoro; Burau representation}

\Classification{35A01; 65L10; 65L12; 65L20; 65L70}

\section{Introduction and results}

The construction of $q$-deformed rational numbers by Morier-Genoud and Ovsienko \cite{morier2020continued} starts from the observation that rational numbers are generated by the image of zero under the action of the modular group $\PSL_2(\Z)$. This group is generated by the translation $T(x)=x+1$ and the inversion $S(x)=-1/x$. The only relations between these operations are $S^2=\mathrm{id}=(ST)^3$.

The $q$-deformed integers $[n]_q=1+q+q^2+\dots +q^{n-1}=\frac{1-q^n}{1-q}$, where $q\in \C^*$, satisfy the relation $[n+1]_q=q[n]_q+1$. It is natural to introduce as $q$-analog to the translation $T$ the transformation $T_q(x)=qx+1$. The map $S_q(x)=-1/(qx)$ satisfies $S_q^2=\mathrm{id}=(S_qT_q)^3$. The~\emph{$q$-rational numbers} are then defined by the image of zero under the action by $T_q$ and $S_q$ using for example the continued fraction representation of a rational number. Since these operations are M\"obius transformations, we can represent them in matrix form as follows:
\[
T_q=\begin{pmatrix} q&1\\ 0 & 1 \end{pmatrix}\qquad \text{and}\qquad S_q=\begin{pmatrix} 0&-1\\ q & 0 \end{pmatrix}.
\]
This coincides with the \emph{reduced Burau representation} of the braid group $B_3$ with parameter~${t=-q}$~\cite{burau}. Indeed the standard generators of $B_3$ are represented by $\sigma_1=T_q$ and $\sigma_2=S_qT_qS_q=\left(\begin{smallmatrix}1 & 0 \\ -q & q\end{smallmatrix}\right)$. One easily checks the braid relation $\sigma_1\sigma_2\sigma_1=\sigma_2\sigma_1\sigma_2$. The faithfulness of specializations of the Burau representation (where $q$ is not a formal parameter, but a non-zero complex number) is an open question \cite[Section 7]{bharathram2021burau}. It was studied for real values of $q$ in \cite{scherich2020classification}. In~\cite{morier2023}, a link to $q$-deformed rational numbers allows to partially solve the open question.

Using Taylor expansions of $q$-rational numbers, one can define $q$-real numbers \cite{morier2022q} which are power series in $q$ with integer coefficients. A natural question is \emph{how to do analysis with these~$q$-real numbers?} Basic functions on real numbers are monomials and the exponential function, which are eigenfunctions of the vector fields associated to~$\mathfrak{sl}_2(\R)$ \big(acting on the completed line~$\mathbb{RP}^1$\big). The goal of our investigation is to $q$-deform these vector fields and to analyze their eigenfunctions.

Following a suggestion of Valentin Ovsienko, we can associate to $T_q$ a differential operator $D_{-1}(q)$, which corresponds to the infinitesimal $q$-shift. For $q = 1$, we have ${D_{-1}(1)=\partial={\rm d}/{\rm d}x}$. This operator is given by
\[
D_{-1}:=(1+(q-1)x)\partial.
\]

One can directly check that $D_{-1}$ commutes with $T_q$, where $T_q$ acts on the space of functions by precomposition.
The starting point of the paper is the question whether there is a differential operator associated to $S_q$. This would allow to define in some sense a Lie algebra for the modular group $\PSL_2(\Z)$, or an infinitesimal version of the Burau representation of $B_3$.

In the classical setting for $q=1$, there is an operator which \emph{anti-commutes} with $S$:
\[
S\circ x\partial+x\partial\circ S = 0,
\]
where $S$ acts on the space of functions by precomposition.
We introduce the differential operator~$D_0$, a $q$-deformed version of $x\partial$, given by
\[D_0:=(1+(x-1)q)D_{-1}=(1+(x-1)q)(1+(q-1)x)\partial.
\]

We will see that $D_0$ anti-commutes with $S_q$. Together with $D_1:=S_q\circ D_{-1}\circ S_q$, a deformation of $x^2\partial$, we get three differential operators which are closed under the bracket (see Theorem \ref{Prop:deformed-sl2}):
\begin{Theorem}
The operators $D_{-1}$, $D_0$ and $D_1$ form a Lie algebra with brackets
\begin{align*}
&[D_0,D_1]= \big(q^2-q+1\big)D_1+(1-q)D_0, \qquad
[D_0,D_{-1}] = -\big(q^2-q+1\big)D_{-1}+(1-q)D_0, \\
&[D_{-1},D_1]= 2D_0+(1-q)(D_{1}-D_{-1}).
\end{align*}
\end{Theorem}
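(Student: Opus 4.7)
The plan is a direct computation using the standard Lie bracket formula for first-order differential operators. To streamline it, I would introduce the two polynomials $\alpha(x) := 1+(q-1)x$ and $\beta(x) := 1-q+qx$, so that by definition $D_{-1} = \alpha\,\partial$ and $D_0 = \alpha\beta\,\partial$, with $\alpha' = q-1$ and $\beta' = q$.

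My first step is to bring $D_1$ into the same form. Since $S_q$ acts on functions by precomposition with the involution $\phi(x) = -1/(qx)$, two applications of the chain rule give
\[
D_1 f(x) = \alpha(\phi(x))\,\phi'(\phi(x))\, f'(x).
\]
A short computation using $\phi'(x)=1/(qx^2)$ yields $\phi'(\phi(x)) = qx^2$ and $\alpha(\phi(x)) = \beta(x)/(qx)$, so $D_1 = x\beta\,\partial$.

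My second step is to invoke the standard formula $[f\partial, g\partial] = (fg'-gf')\,\partial$ three times. A short calculation gives
\[
[D_{-1}, D_0] = \alpha^2 \beta'\,\partial = q\alpha^2\,\partial, \qquad [D_{-1}, D_1] = \bigl(\alpha\beta + x(q\alpha - (q-1)\beta)\bigr)\,\partial,
\]
and a similar expression for $[D_0, D_1]$. The key elementary identity behind the middle bracket is that the Wronskian-like quantity $q\alpha - (q-1)\beta = q^2 - q + 1$ is constant in $x$; this is what ultimately produces the eigenvalue $q^2-q+1$ appearing in the statement.

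The third step is to rewrite each of the three resulting polynomials (of degree at most $2$) as a linear combination of $\alpha$, $\alpha\beta$ and $x\beta$, matching the theorem's coefficients by equating powers of $x$; for generic $q$ these three polynomials are linearly independent and form a basis for polynomials of degree $\leq 2$, and since both sides are polynomial in $q$ the identities extend to all $q$. A useful organizational shortcut is the $\mathbb{Z}/2$ symmetry: conjugation by $S_q$ exchanges $D_{-1}$ with $D_1$ (by construction) and sends $D_0$ to $-D_0$ (by the anti-commutativity promised just before the theorem). Hence $[D_0, D_{-1}]$ is forced to be the $S_q$-image of $[D_0, D_1]$, and $[D_{-1}, D_1]$ must be odd under the swap, which is visible in the form $2D_0 + (1-q)(D_1 - D_{-1})$. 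This symmetry roughly halves the work; the only real obstacle is careful polynomial bookkeeping, and no conceptual difficulty arises.
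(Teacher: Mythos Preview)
Your proposal is correct and follows essentially the same route as the paper: a direct computation of the three Wronskian-type brackets $[f\partial,g\partial]=(fg'-gf')\partial$, followed by identifying the resulting degree-$\le 2$ polynomials in the basis corresponding to $D_{-1},D_0,D_1$. The paper organizes the first two brackets slightly differently, using the factorizations $D_0=\beta D_{-1}$ and $D_0=\alpha x^{-1}D_1$ together with $[gA,A]=-A(g)\,A$, but this lands on exactly the same polynomials $-q\alpha^2\partial$ and $\beta^2\partial$ that your Wronskian computation produces. Your explicit observation that $q\alpha-(q-1)\beta=q^2-q+1$ is constant, and your use of the $S_q$-conjugation symmetry to pair off the $[D_0,D_{\pm1}]$ computations, are pleasant additions not spelled out in the paper's proof.
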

The theorem tells us that the module over $\R[q]$ generated by $D_{-1}$, $D_0$ and $D_1$ is a deformation of the Lie algebra $\mathfrak{sl}_2(\R)$ which we recover for $q=1$. The Lie algebra $\mathfrak{sl}_2$ being simple, it does not allow for non-trivial deformations. Hence our deformation is isomorphic to $\mathfrak{sl}_2$ as a Lie algebra, but they are different as $\mathbb{Z}[q]$-modules. This is similar to quantum groups.

A fundamental role is played by the M\"obius transformation
\[
g_q(x)=\frac{1+(x-1)q}{1+(q-1)x},
\]
which is a deformation of the identity. It is the eigenfunction of $D_0$ with eigenvalue $q^2-q+1$ and normalization $g_q(0)=1-q$.
We call it the \emph{$q$-rational transition map} since it makes a passage between two different $q$-deformations of rational numbers studied in \cite{bapat2023q}. More precisely (see Theorem \ref{Thm:passage}):
\begin{Theorem}
The two $q$-deformations of rational numbers defined in {\rm \cite[Definition 2.6]{bapat2023q}} are linked via
\[
g_q\left(\left[\frac{r}{s}\right]_q^\sharp\right)=\left[\frac{r}{s}\right]_{q^{-1}}^\flat.\]
\end{Theorem}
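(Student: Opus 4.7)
The natural strategy is to lift the identity on rationals to an intertwining relation between the two $\PSL_2$-actions that generate the sharp and flat $q$-rationals, and then induct along the modular word / continued fraction expansion of $r/s$.

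First I would recall from \cite{bapat2023q} the $2\times 2$ matrix generators whose orbits of a base point produce $[r/s]_q^\sharp$ and $[r/s]_q^\flat$ respectively (morally, the sharp and flat analogues of $T_q$ and $S_q$), together with the continued-fraction algorithm that expresses every rational as a word in these generators applied to $0$. Next I would write $g_q$ itself in projective matrix form,
\[
g_q \equiv \begin{pmatrix} q & 1-q \\ q-1 & 1 \end{pmatrix},
\]
so the target identity becomes an equality of M\"obius transformations rather than of numbers.

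The heart of the proof is then the pair of intertwining identities
\[
g_q \cdot T_q^\sharp \equiv T_{q^{-1}}^\flat \cdot g_q, \qquad g_q \cdot S_q^\sharp \equiv S_{q^{-1}}^\flat \cdot g_q,
\]
(or the analogous identities for whichever generators $R_q^\sharp, L_q^\sharp$ are used in \cite{bapat2023q}), checked by direct $2\times 2$ matrix multiplication up to a scalar. Combined with a base-case verification that $g_q\big([0]_q^\sharp\big) = [0]_{q^{-1}}^\flat$ (and similarly for $[\infty]$, to cover the algorithm's starting data), the intertwining propagates through any word $w$ in the generators: if $[r/s]_q^\sharp = w\cdot 0$, then $g_q\bigl(w\cdot 0\bigr) = w'\cdot g_q(0) = [r/s]_{q^{-1}}^\flat$, where $w'$ is the matching word in the flat generators at $q^{-1}$. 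This is a standard induction on the length of the continued fraction expansion.

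The main obstacle is step three: the sharp and flat conventions in \cite{bapat2023q} differ by a subtle interchange between $q$ and $q^{-1}$ together with scalar normalisations, so one must carefully track which generator corresponds to which and confirm that conjugation by $g_q$ produces exactly the $q\mapsto q^{-1}$ substitution rather than, say, $q\mapsto q$ with an unwanted twist. Once the correct dictionary between the generators is fixed, the matrix verifications themselves are mechanical, and the base-case check reduces to evaluating $g_q(0)=1-q$ and comparing with the explicit value of $[0]_{q^{-1}}^\flat$ given by the definition. No new conceptual input is needed beyond this bookkeeping.
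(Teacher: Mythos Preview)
Your proposal is correct and follows essentially the same route as the paper: establish the intertwining identities $g_q\circ T_q=T_{q^{-1}}\circ g_q$ and $g_q\circ S_q=S_{q^{-1}}\circ g_q$ (this is Proposition~\ref{Prop:qg-Tq-Sq}), then push $g_q$ through the continued-fraction word and verify the base case. One small clarification: in the setup of \cite{bapat2023q} used here, the sharp and flat deformations are produced by the \emph{same} generators $T_q$ and $U_q=T_qS_qT_q$ applied to \emph{different} base points ($\infty$ for $\sharp$ and $\tfrac{1}{1-q}$ for $\flat$), not by distinct generator pairs; so the dictionary you worry about in step three is trivial, and the argument collapses to checking that $g_q(\infty)=\tfrac{1}{1-q^{-1}}$.
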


This theorem comes from the interplay between $g_q, T_q$ and $S_q$ given by $g_q\circ T_q=T_{q^{-1}}\circ g_q$ and $g_q\circ S_q=S_{q^{-1}}\circ g_q$ (see Proposition \ref{Prop:qg-Tq-Sq}). The $q$-rational transition map also satisfies a sort of duality between $q$ and $x$:
\[g_q(x)g_x(q)=1.
\]

The map $g_q$, as well as its multiplicative inverse $g_q^{-1}=1/g_q$, behave very well with the three operators $D_{-1}$, $D_0$ and $D_1$ (see Propositions \ref{Prop:op-diff-gq} and \ref{Prop:mult-by-gq}):

\begin{Proposition}
The $q$-rational transition map $g_q$ and the differential operators $D_{-1}$, $D_0$, and~$D_1$ interact in the following way:
\begin{enumerate}\itemsep=0pt
\item[$(1)$] $D_0(g_q) = \big(q^2-q+1\big)g_q$, $D_{-1}(g_q) = q+(1-q)g_q$, $D_1(g_q) = (q-1)g_q+g_q^2$,
\item[$(2)$] $g_qD_0 = (1-q)D_0+\big(q^2-q+1\big)D_1$, $g_qD_{-1}= D_0+(1-q)D_1$,
\item[$(3)$] $qg_q^{-1}D_0 = (q-1)D_0+\big(q^2-q+1\big)D_{-1}$, $qg_q^{-1}D_{1}= D_0+(q-1)D_{-1}$.
\end{enumerate}
\end{Proposition}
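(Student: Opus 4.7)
The plan is to prove all three parts by direct computation, exploiting the fact that $D_{-1}$, $D_0$, $D_1$ all admit simple expressions as $p(x)\partial$ for a polynomial $p$ of degree at most $2$.

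First I would derive an explicit formula for $D_1$. Starting from $D_1 = S_q \circ D_{-1}\circ S_q$ with $S_q$ acting as precomposition by $x\mapsto -1/(qx)$, two applications of the chain rule yield $D_1 = x(1+(x-1)q)\partial$. Combined with the formulas $D_{-1}=(1+(q-1)x)\partial$ and $D_0=(1+(x-1)q)(1+(q-1)x)\partial$ already in the text, every identity in the proposition becomes a polynomial manipulation involving the two linear forms $1+(q-1)x$ and $1+(x-1)q$, which are precisely the denominator and numerator of $g_q$.

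For part (1), the quotient rule gives $g_q'(x)=(q^2-q+1)/(1+(q-1)x)^2$. Then $D_{-1}(g_q)=(1+(q-1)x)g_q'(x)=(q^2-q+1)/(1+(q-1)x)$, and reducing $q+(1-q)g_q$ to common denominator $1+(q-1)x$ verifies the second identity by a direct expansion of the numerator. The first identity $D_0(g_q)=(q^2-q+1)g_q$ then follows immediately from $D_0=(1+(x-1)q)D_{-1}$. For $D_1(g_q)=(q-1)g_q+g_q^2$, I would clear denominators to $(1+(q-1)x)^2$ and factor out $(1+(x-1)q)$ from both sides; the remaining linear expressions in $x$ match after regrouping, using $(q-1)^2+q=q^2-q+1$.

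Parts (2) and (3) rely on a single clean cancellation: the denominator of $g_q$ is exactly the coefficient of $\partial$ in $D_{-1}$, so $g_qD_{-1}=(1+(x-1)q)\partial$ as operators. The identity $g_qD_{-1}=D_0+(1-q)D_1$ then reduces to the polynomial identity $(1+(q-1)x)+(1-q)x=1$, which is immediate. Multiplying this on the left by $(1+(x-1)q)$ and invoking the explicit formula for $D_1$ yields $g_qD_0=(1-q)D_0+(q^2-q+1)D_1$ via the same factor-and-simplify pattern. Part (3) is the mirror statement with $g_q^{-1}=(1+(q-1)x)/(1+(x-1)q)$: now the denominator of $g_q^{-1}$ cancels the factor $(1+(x-1)q)$ common to $D_0$ and $D_1$, and each of the two identities reduces to a one-line polynomial check of the same flavor.

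There is no conceptual obstacle; the proof is a sequence of short polynomial verifications. The main source of potential error is notational, namely keeping straight the asymmetric pair $(1+(q-1)x)$ and $(1+(x-1)q)$, which differ only by swapping the roles of $x$ and $q$ and which play dual roles throughout the argument.
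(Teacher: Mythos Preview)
Your proposal is correct; every identity does reduce to the short polynomial checks you describe, and your computations go through without issue. The route, however, differs from the paper's in a couple of places worth noting. For $D_1(g_q)$ the paper does not clear denominators but instead exploits the conjugation $D_1=S_qD_{-1}S_q$ together with the relation $g_q\circ S_q=-q/g_q$ (a consequence of Proposition~\ref{Prop:qg-Tq-Sq}) to reduce immediately to the already-known formula for $D_{-1}(g_q)$. For $g_qD_0$ in part~(2), the paper observes that $g_qD_0=(1+(x-1)q)^2\partial$ is exactly the expression obtained for $[D_0,D_1]$ in the proof of Theorem~\ref{Prop:deformed-sl2}, and simply reads off the coefficients from there. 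Finally, for part~(3) the paper does not repeat the cancellation argument but instead takes suitable linear combinations of the two identities in part~(2) to isolate $qD_1$ and $qD_0$ on one side, then divides through by $g_q$. Your approach is more uniform and self-contained, treating each identity as an independent polynomial verification; the paper's approach recycles structural facts (the $S_q$-symmetry, the bracket computation) already established, which highlights how the pieces of the theory fit together but makes the argument less standalone.
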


These relations allow a deformation of the Witt algebra, the complexification of the Lie algebra of polynomial vector fields on the circle. The Witt algebra is described by a vector space basis $(\ell_n)_{n\in\Z}$ with bracket given by
\[
[\ell_n,\ell_m]=(m-n)\ell_{n+m}.
\]
This algebra can be realized as differential operators (or equivalently as vector fields) via ${\ell_n=x^{n-1}\partial}$.
Putting for $n>1$:
\[
D_n = g_q^{n-1}D_1 \qquad \text{and}\qquad D_{-n} = \big(qg_q^{-1}\big)^{n-1}D_{-1},
\]
we get a deformation of the Witt algebra (see Theorem \ref{Prop-deformed-Witt}):
\begin{Theorem}
The $(D_n)_{n\in\mathbb{Z}}$ form a Lie algebra with bracket given by $($where $n,r >0)$:
\begin{align*}
&[D_0,D_n]= n\big(q^2-q+1\big)D_n+\big(q^2-q+1\big)\displaystyle\sum_{k=1}^{n-1}(1-q)^kD_{n-k} + (1-q)^nD_0, \\
&[D_n,D_{n+r}]=rD_{2n+r}+(q-1)rD_{2n+r-1},\\
&[D_{-n},D_n]= 2nq^{n-1}D_0+(2n-1)q^{n-1}(q-1)(D_{-1}-D_1),\\
&[D_{n+r},D_{-n}]= (q-1)q^{n-1}(2n+r-1)D_{r+1}-\big(q^2+(2n+r-2)q+1\big)q^{n-1}D_r,\\
&\phantom{[D_{n+r},D_{-n}]=}{} -q^{n-1}\big(q^2-q+1\big)\displaystyle\sum_{k=1}^{r-1}(1-q)^kD_{r-k}-(1-q)^rq^{n-1}D_0.
\end{align*}
The remaining brackets $[D_0,D_{-n}]$, $[D_{-n},D_{-n-r}]$ and $[D_n,D_{-n-r}]$ obey similar formulas.
\end{Theorem}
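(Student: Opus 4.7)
My plan is to compute each bracket by applying a single commutator identity to the recursive definitions $D_n = g_q^{n-1}D_1$ and $D_{-n} = (qg_q^{-1})^{n-1}D_{-1}$, and then to reduce the resulting expression using only data already collected above. The workhorse is
\[
[fA,\,gB] \;=\; f\,A(g)\,B \;-\; g\,B(f)\,A \;+\; fg\,[A,B],
\]
valid for functions $f,g$ and first-order differential operators $A,B$ with no zeroth-order term (it is an immediate consequence of $[A,M_h] = M_{A(h)}$ for such $A$). I would plug in four kinds of input: (i) the three $\mathfrak{sl}_2$-brackets of Theorem~\ref{Prop:deformed-sl2}; (ii) the derivatives $D_0(g_q)$, $D_{\pm1}(g_q)$ from part~(1) of the preceding proposition, extended to powers by Leibniz as $D(g_q^k) = kg_q^{k-1}D(g_q)$; (iii) the conversion formulas of parts~(2) and~(3), which rewrite $g_qD_0$, $g_qD_{-1}$, $qg_q^{-1}D_0$, $qg_q^{-1}D_1$ as linear combinations of $D_{-1}$, $D_0$, $D_1$; and (iv) the cancellation $g_q^{n-1}\cdot(qg_q^{-1})^{n-1} = q^{n-1}$, which collapses the middle term of any mixed bracket to a scalar multiple of an $\mathfrak{sl}_2$-bracket.

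For $[D_0,D_n]$ the identity directly produces $n(q^2-q+1)D_n + (1-q)g_q^{n-1}D_0$; iterating the conversion $g_qD_0 = (1-q)D_0+(q^2-q+1)D_1$ exactly $n-1$ times then telescopes into $\sum_{k=1}^{n-1}(1-q)^kD_{n-k}$ with $(1-q)^nD_0$ as the remainder. For $[D_n,D_{n+r}]$ the middle $[D_1,D_1]$ vanishes and the two outer Leibniz terms differ only in an $(n+r-1)$ versus $(n-1)$ coefficient, so their subtraction immediately gives $rD_{2n+r}+r(q-1)D_{2n+r-1}$. For $[D_{-n},D_n]$ the cancellation~(iv) turns the middle into $q^{n-1}[D_{-1},D_1]$ while each of the two outer terms collapses after a single application of a conversion formula; summing the three contributions produces the stated $2nq^{n-1}D_0+(2n-1)q^{n-1}(q-1)(D_{-1}-D_1)$.

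The bracket $[D_{n+r},D_{-n}]$ is where essentially all of the bookkeeping lives: the cancellation~(iv) is only partial here, leaving a residual factor $g_q^{r}$ multiplying one of $D_0,D_{\pm1}$, which must itself be reduced by iterating the conversion formulas a further $r-1$ times. This generates the second telescoping sum $\sum_{k=1}^{r-1}(1-q)^kD_{r-k}$ together with its $(1-q)^rD_0$ tail, and the extra constant terms $-(q^2+(2n+r-2)q+1)q^{n-1}D_r$ and $(q-1)q^{n-1}(2n+r-1)D_{r+1}$ arise from the $D_0(g_q^k)$ and $D_{\pm1}(g_q^k)$ inputs. Tracking the number and order of iterations — and the signs that interchange $(1-q)$ with $(q-1)$ when one passes between relations~(2) and~(3) — is the main obstacle, since a single off-by-one error propagates through the entire formula. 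The three remaining brackets $[D_0,D_{-n}]$, $[D_{-n},D_{-n-r}]$, $[D_n,D_{-n-r}]$ then follow by the symmetric argument that swaps $D_1\leftrightarrow D_{-1}$ and $g_q\leftrightarrow qg_q^{-1}$ throughout; closure and Jacobi are automatic because the $D_n$ are realised as honest differential operators on functions.
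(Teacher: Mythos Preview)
Your proposal is correct and follows essentially the same route as the paper. Both arguments rest on the commutator identity $[fA,gB]=fA(g)B-gB(f)A+fg[A,B]$, the derivatives $D_i(g_q)$ from Proposition~\ref{Prop:op-diff-gq}, and the conversion relations of Proposition~\ref{Prop:mult-by-gq}; the cancellation $g_q^{n-1}(qg_q^{-1})^{n-1}=q^{n-1}$ you highlight is exactly what the paper exploits in its equation for $[D_a,D_{-b}]$. The only cosmetic difference is that the paper handles $[D_0,D_n]$ and $[D_{-n},D_n]$ by induction on $n$ (peeling off one factor of $g_q$ at each step), whereas you expand $D_n=g_q^{n-1}D_1$ in one go and then iterate the conversion $g_qD_0=(1-q)D_0+(q^2-q+1)D_1$; your iteration \emph{is} that induction, just packaged differently. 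For the mixed bracket $[D_{n+r},D_{-n}]$ the two write-ups are virtually identical: the paper records the auxiliary identity $g_q^rD_0=(q^2-q+1)\sum_{k=0}^{r-1}(1-q)^kD_{r-k}+(1-q)^rD_0$ separately and then plugs it in, which is precisely the telescoping reduction you describe.
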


Integrating the vector fields associated to $D_{-1}$, $D_0$ and $D_1$ on the hyperbolic plane, we get M\"obius transformations. We speculate about a $q$-deformed hyperbolic plane on which these transformations naturally act. The boundary of this deformed hyperbolic plane should be the~$q$-deformed real numbers. Other interesting open questions include the link between our~$q$-deformed $\mathfrak{sl}_2$ and the quantum group $\mathcal{U}_q(\mathfrak{sl}_2)$, or the existence of a central extension of our deformed Witt algebra, which would give a deformed Virasoro algebra.

Deformations of rational numbers were introduced in \cite{morier2020continued}, extended to real numbers in \cite{morier2022q} and to Gaussian integers in \cite{ovsienko2021towards}. Many different deformations of the Witt algebra or its central extension, the Virasoro algebra, have been introduced in the past: first in \cite{curtright1990deforming} and then in \cite{chaichian1991q} deforming the matrix Lie bracket to $[A,B]_q=qAB-q^{-1}BA$. This also deforms the Jacobi identity. A similar construction was done in \cite{hu2005q} viewing the Witt algebra as space of derivations of $\C\big[x^{\pm 1}\big]$ and using the $q$-differential $\partial_q(f)=\tfrac{f(qx)-f(x)}{qx-x}$. This was generalized in~\cite{hartwig2006deformations} to more general $\sigma$-derivatives. Deforming the cocycle gives a $q$-Virasoro algebra in \cite{kassel1992cyclic}, developed into a~theory of $q$-deformed pseudo-differential operators in \cite{khesin1997extensions}. A deformation as Lie algebra in terms of an operator product expansion is given in \cite{shiraishi1996quantum}. A similar proposal can be found in \cite[equation (1.3)]{frenkel1996quantum}, using a $q$-deformed Miura transformation. In \cite[equation~(38)]{nedelin2017q}, the deformation
\[
[T_m(q),T_n(q)]=([-n]_q-[-m]_q)(T_{n+m}\big(q^2\big)-T_{n+m}(q))\]
 is studied. Yet another proposal from \cite[formula~(3.18)]{nigro2016q} gives operators $D_n(q)$ for $n\in\Z$ with commutator~${[D_n(q),D_m(q)]=\big(q-q^{-1}\big)[n-m]_qD_{n+m}\big(q^2\big)}$ (removing the central extension). Finally, in~\cite{avan2017deformed}, a two-dimensional deformation using elliptic algebras is studied. All these approaches are different from ours.

\textbf{Structure of the paper.} In Section \ref{Sec:sl2}, we introduce and study the deformation of $\mathfrak{sl}_2$, the Heisenberg algebra and the $q$-rational transition map. This is broadened in Section \ref{Sec:Witt} to a~deformed Witt algebra. In the final Section \ref{Sec:Moebius}, we study the M\"obius transformations associated to these deformations.

\section[Deformed sl\_2 and Heisenberg algebra]{Deformed $\boldsymbol{\mathfrak{sl}_2}$ and Heisenberg algebra}\label{Sec:sl2}

The group $\mathrm{SL}_2$ acts naturally on the projective line $\mathbb{P}^1$. We will work over $\R$ or $\C$. Differentiating this action at the identity gives a realization of the Lie algebra $\mathfrak{sl}_2$ as vector fields on $\mathbb{P}^1$.
Using the two standard charts of $\mathbb{P}^1$ with transition function $x\mapsto 1/x$, the image of $\mathfrak{sl}_2\to \mathrm{Vect}\big(\mathbb{P}^1\big)$ is generated by $\partial$, $x\partial$ and $x^2\partial$ written in the first chart, where we use the notation $\partial={\rm d}/{\rm d}x$. One readily checks that these expressions are well-defined over the second chart.

We construct a deformation of these three differential operators. They come as a realization of a Lie algebra which itself deforms $\mathfrak{sl}_2$. Together with a $q$-deformed identity map, we deform the 3-dimensional Heisenberg algebra.

\subsection[Deformed sl\_2]{Deformed $\boldsymbol{\mathfrak{sl}_2}$}
On $\mathbb{P}^1$, consider the M\"obius transformations
\[
T_q(x)=qx+1 \qquad \text{and}\qquad S_q(x)=-\frac{1}{qx},
\]
where $q\in \C^*$ is fixed or seen as a formal parameter.
They deform the translation $x\mapsto x+1$ and the inversion $x\mapsto -1/x$. These transformations act on the space of functions on $\mathbb{P}^1$ by precomposition.

Consider the differential operator $D_{-1}$ on $\mathbb{P}^1$ which is defined in the first chart by
\[
D_{-1}:=(1+(q-1)x)\partial.
\]

\begin{Proposition}\label{Prop:commute-T_q}
The operators $D_{-1}$ and $T_q$ commute, where $T_q$ acts on the space of functions by precomposition.
\end{Proposition}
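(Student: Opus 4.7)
The plan is to unwind the definition of ``precomposition'' and verify that $D_{-1}$ and $T_q$ commute by direct calculation on a test function $f$. Writing $(T_q f)(x)=f(T_q(x))=f(qx+1)$, the chain rule gives
\[
D_{-1}(T_q f)(x) = (1+(q-1)x)\,\partial_x f(qx+1) = q(1+(q-1)x)\, f'(qx+1).
\]
On the other side,
\[
(T_q\circ D_{-1} f)(x) = (D_{-1} f)(qx+1) = \bigl(1+(q-1)(qx+1)\bigr)\, f'(qx+1).
\]

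The whole identity then reduces to the algebraic check $1+(q-1)(qx+1)=q(1+(q-1)x)$, which is immediate by expansion. This is the single key step; there is no real obstacle, since the coefficient $1+(q-1)x$ appearing in $D_{-1}$ is tailor-made so that it is multiplied by $q$ under the substitution $x\mapsto qx+1$. Conceptually, this is saying that the affine map $T_q$ acts on the $1$-form $dx/(1+(q-1)x)$ by a scalar, so its dual vector field $D_{-1}$ is $T_q$-invariant, which in turn is equivalent to $[D_{-1},T_q]=0$ as operators on functions. This geometric remark would be mentioned after the computation to explain why $D_{-1}$ is the natural infinitesimal generator associated with $T_q$.
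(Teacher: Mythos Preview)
Your proof is correct and follows exactly the same approach as the paper: a direct computation of $D_{-1}\circ T_q$ and $T_q\circ D_{-1}$ on a test function, reducing to the identity $1+(q-1)(qx+1)=q(1+(q-1)x)$. The only addition is your closing geometric remark about the $T_q$-invariance of the $1$-form ${\rm d}x/(1+(q-1)x)$, which is not in the paper but is a nice explanatory gloss.
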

\begin{proof}
For a function $f(x)$, we have on the one side
\[
D_{-1}\circ T_q(f(x))=D_{-1}(f(qx+1)) = (1+(q-1)x)qf'(qx+1).
\]
On the other side,
\[
T_q\circ D_{-1}(f(x))=T_q((1+(q-1)x)f'(x)) = (1+(q-1)(qx+1))f'(qx+1).
\]
Both expressions coincide.
\end{proof}

The unique eigenfunction $E_q$ of $D_{-1}$ with eigenvalue 1 and normalization $E_q(0)=1$ is a~$q$-deformation of the exponential function, called the \emph{Tsallis exponential} \cite{tsallis1988possible}. This was first observed by Valentin Ovsienko and Emmanuel Pedon.\footnote{Unpublished, private communication.} To find $E_q$, one has to solve ${f=D_{-1}f=(1+(q-1)x)f'}$, i.e., $(\ln f)'=\frac{1}{1+(q-1)x}$. The solution is given by
\[
E_q(x)=(1+(q-1)x)^{\frac{1}{q-1}}.
\]
It satisfies $E_q(qx+1)=E_q(1)E_q(x)$ since $E_q(qx+1)=T_qE_q$ is also an eigenfunction of $D_{-1}$ with eigenvalue 1.

The main new operator we introduce is the following:
\[
D_0:=(1+(x-1)q)D_{-1}=(1+(x-1)q)(1+(q-1)x)\partial.
\]

\begin{Proposition}\label{Prop:commute-S_q}
The operators $D_0$ and $S_q$ anti-commute, where $S_q$ acts on the space of functions by precomposition.
\end{Proposition}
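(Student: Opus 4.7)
The plan is a direct computation in the same spirit as Proposition \ref{Prop:commute-T_q}: apply both compositions $D_0\circ S_q$ and $S_q\circ D_0$ to an arbitrary function $f$, simplify, and check that the two results sum to zero.

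For $D_0\circ S_q$, I would use that $(S_q f)(x)=f(-1/(qx))$; the chain rule then produces a factor $1/(qx^2)$ in $\partial(S_q f)$, so
\[
(D_0\circ S_q)(f)(x) = \frac{(1+(q-1)x)(1+(x-1)q)}{qx^2}\, f'\!\left(-\tfrac{1}{qx}\right).
\]
For $S_q\circ D_0$, I would substitute $y=-1/(qx)$ into the coefficient of $\partial$ in $D_0$ and observe that its two linear factors transform neatly into one another:
\[
1+(q-1)y = \frac{1+(x-1)q}{qx}, \qquad 1+(y-1)q = -\frac{1+(q-1)x}{x}.
\]
Multiplying these and then combining with $f'(-1/(qx))$ reproduces exactly the same rational factor as above but with an overall minus sign, so the two compositions cancel.

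The only point requiring a little care is the sign and Jacobian bookkeeping; conceptually, $D_0$ is tailor-made so that its two linear factors get swapped by $S_q$ up to a sign and a factor of $1/(qx^2)$, which is precisely what forces anti-commutation rather than commutation. I do not anticipate any deeper obstacle.
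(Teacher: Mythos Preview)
Your proposal is correct and follows essentially the same route as the paper: a direct computation of $D_0\circ S_q$ and $S_q\circ D_0$ on an arbitrary $f$, using the chain rule and the substitution $y=-1/(qx)$ to see that the two linear factors of $D_0$ swap (with the requisite sign and $1/(qx^2)$), yielding the anti-commutation. The paper's proof differs only in presentation, not in substance.
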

The proof is a direct verification, similar to the proof of Proposition \ref{Prop:commute-T_q}. An equivalent statement is $S_q\circ D_0\circ S_q=-D_0$.

\begin{proof} For a function $f(x)$, we have on the one side
\[
D_0\circ S_q(f(x))=D_0 f\left(-\frac{1}{qx}\right)=(1+(x-1)q)(1+(q-1)x)f'\left(-\frac{1}{qx}\right)\frac{1}{qx^2}.
\]
On the other hand,
\begin{align*}
S_q\circ D_0(f(x)) &= S_q((1+(x-1)q)(1+(q-1)x)f'(x)) \\
&= \left(1+q\left(-\frac{1}{qx}-1\right)\right)\left(1-\frac{1}{qx}(q-1)\right)f'\left(-\frac{1}{qx}\right)\\
&= -\frac{1}{qx^2}(1+(x-1)q)(1+(q-1)x)f'\left(-\frac{1}{qx}\right).\tag*{\qed}
\end{align*}\renewcommand{\qed}{}
\end{proof}

More generally, we can find all operators $D$ of the form $p(x)\partial$ which anti-commute with $S_q$. The relation $\{D,S_q\}=0$ gives
\[
p(x)=-qx^2p\left(-\frac{1}{qx}\right).
\]
Adding as constraint that $p$ has to be polynomial, it is clear that it is of degree at most 2. Plugging in $p(x)=p_0+p_1x+p_2x^2$ gives a solution for any $p_1$ and $p_2=-qp_0$. In other words, the two fundamental solutions are $p(x)=x$ and $p(x)=1-qx^2$.
Note in particular that the undeformed operator $x\partial$ still anticommutes with $S_q$.
The particular choice above for $D_0$ is~${p_1=-1+3q-q^2}$ and $p_0=1-q$. We will see below why this is the simplest choice.

Let us determine the eigenfunctions of $D_0$ with eigenvalue $\alpha$. One has to solve $\alpha f=D_0 f$, i.e., $(\ln f)'=\frac{\alpha}{(1+(q-1)x)(1+(x-1)q)}$. The solutions are
\[
\left(\frac{1+(x-1)q}{1+(q-1)x}\right)^{\frac{\alpha}{q^2-q+1}}.
\]
We define the \emph{$q$-rational transition map}
\begin{equation}\label{Eq-def-g_q}
g_q(x)=\frac{1+(x-1)q}{1+(q-1)x},
\end{equation}
which is the unique eigenfunction of $D_0$ with eigenvalue $q^2-q+1$ and normalization $g_q(0)=1-q$. We can think of $g_q$ as a deformation of the identity map. We study this function more in detail below in Section~\ref{Sec:Ovsienko}.

Now we come back to the discussion why our $D_0$ is the simplest choice. Consider an operator~${D=p(x)\partial}$ anti-commuting with $S_q$, i.e., of the form $p(x)=p_0+p_1x-qp_0x^2$ with arbitrary~${p_0,p_1\in\Z[q]}$. We impose that $D$ deforms $x\partial$, that is $p_0(1)=0$ and $p_1(1)=1$. We also impose the leading terms of $p_0$, $p_1$ to be $\pm 1$. We wish that the eigenfunctions of $D$ are M\"obius transformations in $\Z[q]$. This is only the case if the discriminant of $p_0+p_1x-qp_0x^2$ is a square in~$\Z[q]$. This leads to the equation $p_1(q)^2+4qp_0(q)^2=R(q)^2$ for some $R\in\Z[q]$. This is equivalent to $4qp_0^2=(R-p_1)(R+p_1)$. Excluding the case where $p_0=0$ which leads to the undeformed operator $x\partial$, the next simplest case is $p_0(q)=1-q$. By treating all possible factorizations of $4q(1-q)^2$, we see that the $p_1$ with lowest degree has to be $p_1(q)=-1+3q-q^2$ which is the case for our choice $D_0$.

We complete the operators $D_{-1}$ and $D_0$ to a deformed $\mathfrak{sl}_2$. For that, we wish to deform $x^2\partial$.
Note that $x^2\partial = S\circ \partial\circ S$. This motivates the following definition:
\[
D_1:=S_q\circ D_{-1}\circ S_q=(1+(x-1)q)x\partial.
\]
By definition, $D_1$ commutes with $S_qT_qS_q$.

Our first result is that these three operators give a Lie algebra deforming $\mathfrak{sl}_2$:
\begin{Theorem}\label{Prop:deformed-sl2}
The operators $D_{-1}$, $D_0$ and $D_1$ form a Lie algebra with brackets
\begin{align*}
&[D_0,D_1]= \big(q^2-q+1\big)D_1+(1-q)D_0, \qquad
[D_0,D_{-1}] = -\big(q^2-q+1\big)D_{-1}+(1-q)D_0, \\
&[D_{-1},D_1]= 2D_0+(1-q)(D_{1}-D_{-1}).
\end{align*}
For $q=1$, we get the Lie algebra $\mathfrak{sl}_2$.
\end{Theorem}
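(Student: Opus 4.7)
The plan is to reduce the theorem to an elementary polynomial calculation, using the fact that each of $D_{-1}, D_0, D_1$ is a first-order operator $p(x)\partial$. Writing $D_{-1} = p_{-1}\partial$, $D_0 = p_0\partial$, $D_1 = p_1\partial$ with $p_{-1}(x) = 1+(q-1)x$, $p_0(x) = (1+(x-1)q)(1+(q-1)x)$, $p_1(x) = (1+(x-1)q)x$, the standard identity
\[
[p\partial, r\partial] = (pr' - p'r)\partial
\]
reduces each bracket to multiplication and differentiation of explicit polynomials in $x$ with coefficients in $\mathbb{Z}[q]$.

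Next I would carry out the three computations $p_0 p_{-1}' - p_0' p_{-1}$, $p_0 p_1' - p_0' p_1$, and $p_{-1} p_1' - p_{-1}' p_1$. In each case the cubic in $x$ coming from the leading terms of $p_0$ and $p_1$ cancels (a useful sanity check), so the result is a polynomial of degree at most $2$ in $x$. A direct check, expanding $p_{-1}, p_0, p_1$ in the monomial basis $\{1,x,x^2\}$, shows that $p_{-1}, p_0, p_1$ are linearly independent over $\mathbb{Q}(q)$, so any polynomial of degree $\leq 2$ has a unique expansion in them. One then matches coefficients of $1$, $x$, $x^2$ between the left- and right-hand sides of each claimed bracket identity.

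The amount of computation can be halved using the $S_q$-symmetry. From Proposition~\ref{Prop:commute-S_q} and the very definition $D_1 = S_q D_{-1} S_q$ (together with $S_q^2 = \mathrm{id}$), conjugation by $S_q$ sends $D_0 \mapsto -D_0$ and swaps $D_{-1} \leftrightarrow D_1$. Hence $S_q [D_0, D_{-1}] S_q = -[D_0, D_1]$, so the two ``off-diagonal'' brackets determine each other, and $S_q [D_{-1}, D_1] S_q = -[D_{-1}, D_1]$, which is automatically compatible with the antisymmetric right-hand side $2D_0 + (1-q)(D_1 - D_{-1})$. Thus a single polynomial calculation plus one check of consistency with the symmetry suffices.

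Finally, to establish the last sentence, specialize $q=1$: then $D_{-1} = \partial$, $D_0 = x\partial$, $D_1 = x^2\partial$, and the three brackets collapse to $[D_0, D_1] = D_1$, $[D_0, D_{-1}] = -D_{-1}$, $[D_{-1}, D_1] = 2D_0$, which is the standard realization of $\mathfrak{sl}_2$ as vector fields on $\mathbb{P}^1$. The main obstacle is nothing conceptual: it is just the bookkeeping in the polynomial identities, which, while straightforward, is easy to mis-sign by hand, so exploiting the $S_q$-conjugation is worthwhile both for efficiency and as a built-in sanity check.
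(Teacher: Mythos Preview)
Your proposal is correct and follows essentially the same approach as the paper: both reduce the theorem to a direct polynomial computation of the vector-field brackets $[p\partial, r\partial] = (pr'-p'r)\partial$ and then identify the resulting quadratic polynomial as a $\mathbb{Z}[q]$-linear combination of $p_{-1}, p_0, p_1$. The only tactical differences are that the paper exploits the factorizations $D_0 = (1+(x-1)q)D_{-1}$ and $D_0 = (1+(q-1)x)x^{-1}D_1$ to compute $[D_0,D_{\pm 1}]$ in one line each, whereas you instead invoke the $S_q$-conjugation symmetry to halve the work; both shortcuts are valid and lead to the same verification.
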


\begin{proof}
The proof is a straightforward computation. All $D_i$ are of the form $g(x)\partial$ with $g$ a polynomial of degree at most 2. This explains why we can express any bracket as linear combination of $D_{-1}$, $D_0$ and $D_1$. The non-trivial part is that the coefficients are in $\Z[q]$.
Since~${D_0=(1+(x-1)q)D_{-1}}$, we get
\[[D_0,D_{-1}]=-D_{-1}(1+(x-1)q)D_{-1}=-q(1+(q-1)x)^2\partial.
\]
Similarly, we have $D_0=(1+(q-1)x)x^{-1}D_1$, hence
\[
[D_0,D_1]=-D_1\big(x^{-1}+q-1\big)D_1=(1+(x-1)q)^2\partial.
\]
The last bracket can be computed to be $[D_{-1},D_1]=\big(1-q+2qx+q(q-1)x^2\big)\partial$. One explicitely checks that these three brackets coincide with results claimed in the theorem.

Finally, it is clear that these brackets satisfy the Jacobi identity since we know a representation of the operators $D_i$ as differential operators.
\end{proof}

The Lie algebra $\mathfrak{sl}_2$ being simple, it does not allow any non-trivial deformations. Our $q$-deformation is indeed abstractly isomorphic to $\mathfrak{sl}_2$ when $q$ and $q^2-q+1$ are invertible. To give an explicit isomorphism, denote by $(f,h,e)$ the generators of $\mathfrak{sl}_2$ given by the differential operators $\big(\partial, x\partial, x^2\partial\big)$. They satisfy $[h,e]=e$, $[h,f]=-f$ and $[e,f]=-2h$. The following is an isomorphism of Lie algebras between $(D_{-1},D_0,D_1)$ and $(f,h,e)$:
\begin{gather*}
f=q^{-1/2}\left(D_{-1}+\frac{q-1}{q^2-q+1}D_0\right), \qquad h=\frac{D_0}{q^2-q+1}, \\
 e=q^{-1/2}\left(D_1+\frac{1-q}{q^2-q+1}D_0\right).
\end{gather*}

Using this isomorphism to $\mathfrak{sl}_2$, we can describe a 2-dimensional representation of the deformed Lie algebra defined by $(D_{-1}, D_0, D_1)$. Using the standard realization $f=\left(\begin{smallmatrix}0 & 0 \\1 & 0\end{smallmatrix}\right)$, \smash{$h=\left(\begin{smallmatrix}1/2 & 0 \\0 & -1/2\end{smallmatrix}\right)$} and $e=\left(\begin{smallmatrix}0 & -1 \\0 & 0\end{smallmatrix}\right)$, we get
\[
D_{-1}=\begin{pmatrix}\tfrac{1-q}{2}&0\vspace{1mm} \\q^{1/2} & \tfrac{q-1}{2}\end{pmatrix}, \qquad D_0=\begin{pmatrix}\tfrac{q^2-q+1}{2}&0\\0 & \tfrac{-q^2+q-1}{2}\end{pmatrix}, \qquad D_{1}=\begin{pmatrix}\tfrac{q-1}{2}& -q^{1/2}\\0 & \tfrac{1-q}{2}\end{pmatrix}.
\]

Note that this representation is not in $\mathfrak{sl}_2(\mathbb{Q}[q])$. A direct computation shows that there is no 2-dimensional representation of our $q$-deformed $\mathfrak{sl}_2$ into $\mathfrak{sl}_2(\mathbb{Q}[q])$. In dimension 3, there is of course the adjoint representation into $\mathfrak{sl}_3(\Z[q])$.

\begin{Remark}
It is tempting to consider $D_{-1}$, $D_1$ and $\widehat{D}_0:=[D_{-1},D_1]$. The operator $\widehat{D}_0$ still anti-commutes with $S_q$ and the bracket relations are
\[
[\widehat{D}_0,D_{\pm 1}]=\pm \big(q^2+1\big)D_{\pm 1}\pm (q-1)^2D_{\mp 1}.\]
The main drawback of this choice is that the eigenfunctions of $\widehat{D}_0$ are M\"obius transformations with coefficients not in $\Z[q]$.
\end{Remark}

\begin{Remark}\label{Rk:mod-square}
A simpler and very similar Lie algebra deforming $\mathfrak{sl}_2$ is given by generators $(d_{-1},d_0,d_1)$ with brackets
\begin{gather*}
[d_0,d_{-1}]= -qd_{-1} + (1 - q)d_0, \qquad [d_0,d_1]=qd_1 + (1 - q)d_0,\\ [d_{-1},d_{1}]=2d_0 + (1 - q)(d_{1} - d_{-1}).
\end{gather*}
It can be obtained as our deformation for a formal parameter $q$ with relation $(q-1)^2=0$. Then~${q^2-q+1=q}$. One checks that the Jacobi identity still holds.
\end{Remark}

\subsection[q-rational transition map]{$\boldsymbol{q}$-rational transition map}\label{Sec:Ovsienko}

The map $g_q$ defined in \eqref{Eq-def-g_q} plays a fundamental role, both for generalizing the $q$-deformation from $\mathfrak{sl}_2$ to the Witt algebra in Section \ref{Sec:Witt}, and in the theory of $q$-deformed rationals as we shall see now. It allows to pass between two different $q$-deformations of the rational numbers.

Recall that the $q$-rational transition map is defined by
\[
g_q(x)=\frac{1+(x-1)q}{1+(q-1)x},
\]
which is a deformation of the identity. It is the eigenfunction of $D_0$ with eigenvalue $q^2-q+1$ and normalization $g_q(0)=1-q$.
Note that $g_q$ is a M\"obius transformation associated to the matrix
\[
\begin{pmatrix} q & 1-q \\ q-1 & 1\end{pmatrix},
\]
which is of determinant $q^2-q+1$. For $q\neq 1$, \smash{$g_q$} is an elliptic transformation since its normalized trace is given by
\[
\frac{q+1}{\sqrt{q^2-q+1}}<2.
\] The unique fixed point on $\H^2$ is $\tfrac{1+{\rm i}\sqrt{3}}{2}$ which is independent of~$q$.

From the definition of $g_q$, we see the following duality between $q$ and $x$:
\[
g_q(x)g_x(q)=1.\]

\begin{Proposition}\label{Prop:qg-Tq-Sq}
The functions $g_q$, $T_q$ and $S_q$, seen as $2\times 2$ matrices satisfy:
\[
g_qT_q=qT_{q^{-1}}g_q \qquad \text{and}\qquad g_qS_q=qS_{q^{-1}}g_q.
\]
Therefore, seen as M\"obius transformations, we have $g_q\circ T_q = T_{q^{-1}}\circ g_q$ and $g_q\circ S_q = S_{q^{-1}}\circ g_q$.
\end{Proposition}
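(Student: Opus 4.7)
The plan is to verify both identities by a direct matrix computation, and then deduce the Möbius transformation statement as a corollary. Concretely, I would recall the matrix forms
\[
T_q=\begin{pmatrix} q & 1 \\ 0 & 1 \end{pmatrix}, \qquad S_q=\begin{pmatrix} 0 & -1 \\ q & 0 \end{pmatrix}, \qquad g_q=\begin{pmatrix} q & 1-q \\ q-1 & 1 \end{pmatrix},
\]
where the matrix for $g_q$ is read off from the defining formula \eqref{Eq-def-g_q}. The first two come from the introduction; the third just records the coefficients of $g_q(x)=\frac{qx+(1-q)}{(q-1)x+1}$.

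For the first identity, I would compute $g_qT_q$ and $qT_{q^{-1}}g_q$ separately. A quick multiplication gives $g_qT_q=\left(\begin{smallmatrix} q^2 & 1 \\ q(q-1) & q \end{smallmatrix}\right)$, and on the other side $qT_{q^{-1}}g_q=\left(\begin{smallmatrix} 1 & q \\ 0 & q \end{smallmatrix}\right)g_q$, which also equals $\left(\begin{smallmatrix} q^2 & 1 \\ q(q-1) & q \end{smallmatrix}\right)$. For the second identity, similarly $g_qS_q=\left(\begin{smallmatrix} q(1-q) & -q \\ q & 1-q \end{smallmatrix}\right)$ and $qS_{q^{-1}}g_q=\left(\begin{smallmatrix} 0 & -q \\ 1 & 0 \end{smallmatrix}\right)g_q$ yields the same matrix.

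Having verified the two matrix identities, the second half of the proposition is immediate: since the scalar factor $q$ does not affect the induced Möbius transformation, the identities $g_qT_q=qT_{q^{-1}}g_q$ and $g_qS_q=qS_{q^{-1}}g_q$ descend to $g_q\circ T_q=T_{q^{-1}}\circ g_q$ and $g_q\circ S_q=S_{q^{-1}}\circ g_q$ in $\mathrm{PGL}_2$. There is essentially no obstacle here; the only subtlety worth flagging is that the scalar $q$ on the right is exactly what one needs to offset the change of $\det$ (since $\det T_q=q$ while $\det T_{q^{-1}}=q^{-1}$, etc.), which is a useful sanity check but not part of the formal proof. A more conceptual derivation could go through the $D_i$-eigenfunction characterization of $g_q$, but given the short form of the matrices the direct computation is cleanest.
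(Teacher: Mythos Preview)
Your proof is correct and follows essentially the same approach as the paper's own proof: a direct verification of the two matrix identities, followed by the observation that the scalar $q$ drops out at the level of M\"obius transformations. The paper also remarks, as you do, that the second identity admits an alternative derivation via the eigenfunction characterization of $g_q$ for $D_0$.
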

\begin{proof}
Both assertions can be checked by a direct computation:
\[
g_qT_q=\begin{pmatrix}q & 1-q\\q-1 & 1 \end{pmatrix}\begin{pmatrix}q & 1\\0 & 1 \end{pmatrix}=\begin{pmatrix}q^2 & 1\\q^2-q & q \end{pmatrix}=qT_{q^{-1}}g_q,
\]
and similarly
\[
g_qS_q=\begin{pmatrix}q & 1-q\\q-1 & 1 \end{pmatrix}\begin{pmatrix}0 & -1\\q & 0 \end{pmatrix}=\begin{pmatrix}q-q^2 & -q\\q & 1-q \end{pmatrix}=qS_{q^{-1}}g_q.
\]
The second identity can be derived also as follows: since $S_qD_0S_q=-D_0$, we see that both~$g_q^{-1}(x)$ and $g_q(S_q(x))$ are eigenfunctions of $D_0$ with eigenvalue $-q^2+q-1$. Hence they have to be multiple of each other. The precise relation is given by $g_q(S_q(x))=\tfrac{-q}{g_q(x)}=S_{q^{-1}}(g_q(x))$.
\end{proof}

We describe now the main link to $q$-deformed rational numbers. In \cite[Remark 3.2]{morier2022q}, the authors notice that the procedure for $q$-deformed irrational numbers gives two different answers when applied to rationals. This was further developed in \cite{bapat2023q}, from which we borrow the notations. When one approaches a rational $r/s$ from the right by a sequence of rationals strictly bigger than $r/s$, the procedure gives the so-called \emph{right $q$-rational} $[r/s]_q^\sharp$. This is the deformation obtained from applying $T_q$ and $S_q$ to zero described at the beginning of the Introduction. When approaching $r/s$ from the left, the limit gives another $q$-deformation of $r/s$, called \emph{left $q$-rational} and denoted by $[r/s]_q^\flat$ \cite[Theorem~2.11]{bapat2023q}.

The precise formulas given in \cite[Definition~2.6]{bapat2023q} can be written in our context as follows: consider $U=TST$, which is the function \smash{$U(x)=\tfrac{1}{1+1/x}$}, and its $q$-analog \smash{$U_q=T_qS_qT_q$}. For a~rational $r/s\in \Q$, take the unique even continued fraction expression $r/s=[a_1,a_2,\dots,a_{2n}]$. This means that $r/s=T^{a_1}U^{a_2}T^{a_3}\cdots U^{a_{2n}}(\infty)$. By convention, we put $\infty=[\,]$, the empty expression. Then
\begin{equation}\label{Eq:q-rat-sharp}
\left[\frac{r}{s}\right]_q^\sharp = T_q^{a_1}U_q^{a_2}T_q^{a_3}\cdots U_q^{a_{2n}}(\infty),
\end{equation}
and
\begin{equation}\label{Eq:q-rat-flat}
\left[\frac{r}{s}\right]_q^\flat = T_q^{a_1}U_q^{a_2}T_q^{a_3}\cdots U_q^{a_{2n}}\left(\frac{1}{1-q}\right).
\end{equation}

To give some examples, we have \smash{$[0]_q^\sharp=0$} and \smash{$[0]_q^\flat=\tfrac{q-1}{q}$}, \smash{$[1]_q^\sharp=1$} and \smash{$[1]_q^\flat=q$}, \smash{$[2]_q^\sharp=1+q$} and $[2]_q^\flat=1+q^2$, \smash{$[\infty]_q^\sharp=\infty$} and \smash{$[\infty]_q^\flat=\tfrac{1}{1-q}$}.

It was noticed numerically by Valentin Ovsienko that $g_q$ is a transition between these two~$q$-deformations of rational numbers. This is made precise in the following:
\begin{Theorem}\label{Thm:passage}
The passage between the two $q$-deformations of rationals is given by
\[
g_q\left(\left[\frac{r}{s}\right]_q^\sharp\right)=\left[\frac{r}{s}\right]_{q^{-1}}^\flat.
\]
\end{Theorem}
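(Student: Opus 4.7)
The plan is to reduce the identity to the intertwining relations of Proposition \ref{Prop:qg-Tq-Sq}, i.e., to the fact that $g_q$ conjugates $T_q$ and $S_q$ (as M\"obius transformations) into $T_{q^{-1}}$ and $S_{q^{-1}}$. Since $U_q = T_qS_qT_q$, the same proposition immediately gives $g_q\circ U_q = U_{q^{-1}}\circ g_q$. Thus, starting from the formula \eqref{Eq:q-rat-sharp} and applying $g_q$ on the left, I can push $g_q$ through every factor one by one, replacing $T_q^{a_i}$ and $U_q^{a_i}$ by $T_{q^{-1}}^{a_i}$ and $U_{q^{-1}}^{a_i}$, until $g_q$ hits the seed $\infty$.

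The only substantive verification is then the value $g_q(\infty)$. From $g_q(x)=\frac{1+(x-1)q}{1+(q-1)x}$ one computes $g_q(\infty)=\frac{q}{q-1}=\frac{1}{1-q^{-1}}$, which is exactly the seed appearing in the definition \eqref{Eq:q-rat-flat} of the left $q$-rational at parameter $q^{-1}$. Combining with the intertwining of the generators, one obtains
\[
g_q\!\left(T_q^{a_1}U_q^{a_2}\cdots U_q^{a_{2n}}(\infty)\right)
= T_{q^{-1}}^{a_1}U_{q^{-1}}^{a_2}\cdots U_{q^{-1}}^{a_{2n}}\!\left(\tfrac{1}{1-q^{-1}}\right),
\]
which is precisely $[r/s]_{q^{-1}}^\flat$ by \eqref{Eq:q-rat-flat}.

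There is essentially no serious obstacle: the structural content is entirely contained in Proposition \ref{Prop:qg-Tq-Sq}, and the rest is a two-line verification. The only small subtlety worth emphasizing in the write-up is that the intertwining relations of Proposition~\ref{Prop:qg-Tq-Sq} are stated at the level of $2\times 2$ matrices up to a global scalar $q$, but since we are working with M\"obius transformations this scalar is irrelevant, so the compositional identity $g_q\circ T_q = T_{q^{-1}}\circ g_q$ (and likewise for $S_q$, $U_q$) holds on the nose. That this procedure is well-defined on all rationals relies on the uniqueness of the even continued fraction expansion $r/s=[a_1,\dots,a_{2n}]$ used in \eqref{Eq:q-rat-sharp}--\eqref{Eq:q-rat-flat}, which the quoted results from \cite{bapat2023q} already guarantee.
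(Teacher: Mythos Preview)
Your proof is correct and follows essentially the same route as the paper's own argument: derive $g_q\circ U_q=U_{q^{-1}}\circ g_q$ from Proposition~\ref{Prop:qg-Tq-Sq}, push $g_q$ through the word $T_q^{a_1}U_q^{a_2}\cdots U_q^{a_{2n}}$ in~\eqref{Eq:q-rat-sharp}, and check that $g_q(\infty)=\tfrac{q}{q-1}=\tfrac{1}{1-q^{-1}}$ matches the seed in~\eqref{Eq:q-rat-flat} at parameter $q^{-1}$. Your remarks on the scalar $q$ and on uniqueness of the even continued fraction expansion are correct but inessential additions.
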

Note that $q$ gets inversed to $q^{-1}$. The proof is an application of Proposition \ref{Prop:qg-Tq-Sq}.
\begin{proof}
Proposition \ref{Prop:qg-Tq-Sq} gives $g_qU_q=U_{q^{-1}}g_q$. Using equation \eqref{Eq:q-rat-sharp} and again Proposition \ref{Prop:qg-Tq-Sq}, we get
\[
g_q\left(\left[\frac{r}{s}\right]_q^\sharp\right)=g_qT_q^{a_1}U_q^{a_2}T_q^{a_3}\cdots U_q^{a_{2n}}(\infty)=T_{q^{-1}}^{a_1}U_{q^{-1}}^{a_2}T_{q^{-1}}^{a_3}\cdots U_{q^{-1}}^{a_{2n}}g_q(\infty).
\]
Now $g_q(\infty)=\tfrac{q}{q-1}=\tfrac{1}{1-q^{-1}}$. Hence we conclude by equation \eqref{Eq:q-rat-flat}.
\end{proof}

As an application, we can reprove the positivity property of left $q$-rationals, proven in \cite[Appendix A.1]{bapat2023q} via an explicit combinatorial interpretation.

\begin{Corollary}
For $r/s>1$, we have $R^\flat, S^\flat\in\N[q]$, where $[r/s]_q^\flat=R^\flat(q)/S^\flat(q)$.
\end{Corollary}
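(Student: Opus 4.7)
The plan is to combine Theorem~\ref{Thm:passage} with the Morier-Genoud--Ovsienko positivity of right $q$-rationals. The classical positivity says $[r/s]_q^\sharp=R/S$ with $R,S\in\mathbb{N}[q]$ for any positive rational $r/s$, and I would use it in its matrix form: $(R,S)^\top=T_q^{a_1}U_q^{a_2}\cdots U_q^{a_{2n}}(1,0)^\top$ lies in $\mathbb{N}[q]^2$ because both $T_q$ and $U_q=T_qS_qT_q=\bigl(\begin{smallmatrix}q^2&0\\q^2&q\end{smallmatrix}\bigr)$ have entries in $\mathbb{N}[q]$. Setting $G=\bigl(\begin{smallmatrix}q&1-q\\q-1&1\end{smallmatrix}\bigr)$ for the matrix of $g_q$, Theorem~\ref{Thm:passage} ensures that the projective pair $(P,Q)^\top:=G(R,S)^\top$ represents $[r/s]_{q^{-1}}^\flat$, so the goal reduces to showing $P,Q\in\mathbb{N}[q]$.

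I would prove this by induction along the matrix product, using the commutation relations of Proposition~\ref{Prop:qg-Tq-Sq}. These give $GT_q=qT_{q^{-1}}G$ and, since $U_q=T_qS_qT_q$, also $GU_q=q^3U_{q^{-1}}G$. Translating to the pair $(P,Q)$: extending the product by $T_q$ sends $(P,Q)\mapsto(P+qQ,qQ)$, and extending by $U_q$ sends $(P,Q)\mapsto(qP,qP+q^2Q)$. Both recurrences manifestly preserve $\mathbb{N}[q]^2$, so the inductive step is straightforward.

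The hard part will be the base case. The starting pair $(P_0,Q_0)=G(1,0)^\top=(q,q-1)^\top$ is \emph{not} in $\mathbb{N}[q]^2$, because of the $q-1$ entry. I expect this to be the main obstacle, and it will be resolved by observing that the first factor actually applied in the product is the innermost $U_q^{a_{2n}}$ (with $a_{2n}\geq 1$ by the even continued fraction convention, which holds whenever $r/s>0$); a single application of $U_q$ sends $(q,q-1)^\top$ to $(q^2,q^3)^\top\in\mathbb{N}[q]^2$, after which the two recurrences keep us inside $\mathbb{N}[q]^2$ throughout.

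Finally, I would convert back: $[r/s]_{q^{-1}}^\flat=P(q)/Q(q)$ with $P,Q\in\mathbb{N}[q]$ means, after substituting $q\to q^{-1}$ and multiplying numerator and denominator by a suitable power of $q$ (the palindromic reversal), that $[r/s]_q^\flat=R^\flat(q)/S^\flat(q)$ with $R^\flat,S^\flat\in\mathbb{N}[q]$. The hypothesis $r/s>1$ (equivalently $a_1\geq 1$) is actually stronger than needed; the positivity argument works for every $r/s>0$, but the stated range matches that treated in~\cite{bapat2023q}.
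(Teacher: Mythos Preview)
Your argument is correct and takes a genuinely different route from the paper's. The paper applies $g_q$ directly to $R^\sharp/S^\sharp$ and obtains numerator $q(R^\sharp-S^\sharp)+S^\sharp$ and denominator $(q-1)R^\sharp+S^\sharp$; positivity of the first uses $R^\sharp-S^\sharp\in\N[q]$, which genuinely requires $r/s>1$ together with the order-preserving Theorem~2 of~\cite{morier2020continued}, and positivity of the second is extracted from that same theorem applied to the pair $r/s+1>r/s$. You instead push $G$ through the continued-fraction product via Proposition~\ref{Prop:qg-Tq-Sq}, reducing everything to the two manifestly $\N[q]$-positive recurrences $(P,Q)\mapsto(P+qQ,qQ)$ and $(P,Q)\mapsto(qP,qP+q^2Q)$, plus the single base-case check $GU_q(1,0)^\top=(q^2,q^3)^\top$. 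This is more self-contained---it never invokes the order-preserving theorem, only the commutation relations already proved in the paper---and, as you observe, it actually covers all $r/s>0$ rather than only $r/s>1$; the paper's numerator step would fail for $r/s<1$ since then $R^\sharp-S^\sharp\notin\N[q]$. The price is a slightly longer inductive setup, whereas the paper's argument is essentially a two-line computation once the external positivity results from~\cite{morier2020continued} are taken as given.
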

\begin{proof}
From \cite[Proposition 1.3]{morier2020continued}, we know that for $r/s\in\Q_{>1}$ the right $q$-rational $[r/s]_q^\sharp=R^\sharp(q)/S^\sharp(q)$ is a rational function in $q$ with positive coefficients, i.e., $R^\sharp, S^\sharp\in \mathbb{N}[q]$. We also know from \cite[Theorem 2]{morier2020continued} that if $r/s> r'/s'$, then $R^\sharp S'^\sharp-R'^\sharp S^\sharp \in \N[q]$. Since ${r/s>1}$, we get ${R^\sharp-S^\sharp\in\N[q]}$. Since $r/s+1>r/s$ and $[r/s+1]_q^\sharp=q[r/s]_q^\sharp+1$, we also get ${(q-1)R^\sharp+S^\sharp \in\N[q]}$. Finally, we deduce that
\[
\left[\frac{r}{s}\right]_{q^{-1}}^\flat = g_q\left(\left[\frac{r}{s}\right]_q^\sharp\right)=\frac{q\big(R^\sharp-S^\sharp\big)+S^\sharp}{(q-1)R^\sharp+S^\sharp}
\]
has positive coefficients. Multiplying both numerator and denominator with an appropriate power of $q$, we get the same for $[r/s]^\flat_q$.
\end{proof}

Finally, we can use the transition function $g_q(x)$ as reparametrization of $\mathbb{P}^1$. To emphasize the dependence of our differential operators, we will write here $D_{-1}(q,x)=(1+(q-1)x)\partial_x$ and similar for $D_0$ and $D_{1}$.
\begin{Proposition}
Reparametrizing $\mathbb{P}^1$ by the transition map $\xi=g_q(x)$ gives
\begin{align*}
D_{\pm 1}(q,x) &= q D_{\pm 1}\big(q^{-1},\xi\big), \qquad
D_0(q,x) = \big(q^2-q+1\big)\xi \partial_\xi.
\end{align*}
\end{Proposition}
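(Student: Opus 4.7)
The plan is to convert the $x$-derivative via the chain rule $\partial_x = g_q'(x)\partial_\xi$ and then re-express every coefficient in the new variable $\xi$. A direct quotient-rule computation, which amounts to reading off the determinant $q^2-q+1$ of the Möbius matrix of $g_q$, gives
$$g_q'(x) = \frac{q^2-q+1}{(1+(q-1)x)^2}.$$

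For $D_0$ this settles everything immediately: multiplying the coefficient $(1+(x-1)q)(1+(q-1)x)$ by $g_q'(x)$ cancels one factor of $1+(q-1)x$ and leaves $(q^2-q+1)\cdot g_q(x) = (q^2-q+1)\xi$. For $D_{\pm 1}$ the idea is to extract two algebraic identities from the defining equation $\xi(1+(q-1)x) = 1+(x-1)q$: the equation itself, and (by bringing $q+(1-q)\xi$ to the common denominator $1+(q-1)x$ and expanding)
$$(q+(1-q)\xi)(1+(q-1)x) = q^2-q+1.$$
With these in hand, $D_{-1}(q,x) = (1+(q-1)x)g_q'(x)\partial_\xi$ simplifies to $(q+(1-q)\xi)\partial_\xi = q\bigl(1+(q^{-1}-1)\xi\bigr)\partial_\xi = q\,D_{-1}(q^{-1},\xi)$. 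For $D_1$ one writes $D_1(q,x) = x(1+(x-1)q)\,g_q'(x)\partial_\xi$, uses $1+(x-1)q = \xi(1+(q-1)x)$ to pull out $\xi$, and is left with the factor $\frac{x(q^2-q+1)}{1+(q-1)x}$; a short computation (multiplying both sides by $1+(q-1)x$ and expanding) shows this factor equals $\xi+q-1$, so $D_1(q,x) = \xi(\xi+q-1)\partial_\xi = q\cdot\xi\bigl(1+(\xi-1)q^{-1}\bigr)\partial_\xi = q\,D_1(q^{-1},\xi)$.

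The computation is essentially forced once $g_q'(x)$ is in hand, so the only mild obstacle is spotting the two algebraic identities above, which convert rational expressions in $x$ into polynomial expressions in $\xi$. As a conceptual sanity check one can appeal to Proposition \ref{Prop:qg-Tq-Sq}: the change of variable $\xi = g_q(x)$ conjugates $T_q$ to $T_{q^{-1}}$ and $S_q$ to $S_{q^{-1}}$, so the $T_q$-invariant operator $D_{-1}(q,x)$ must become a scalar multiple of the (up to scale unique) first-order operator commuting with $T_{q^{-1}}$ in $\xi$, namely $D_{-1}(q^{-1},\xi)$, and likewise for $D_1 = S_q D_{-1} S_q$. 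The eigenfunction characterization of $D_0$ independently forces it to take the form $(q^2-q+1)\xi\partial_\xi$ in the new coordinate, in agreement with the direct computation.
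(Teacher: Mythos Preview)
Your proof is correct and follows essentially the same route as the paper's: compute $g_q'(x)=\frac{q^2-q+1}{(1+(q-1)x)^2}$, apply the chain rule $\partial_x=g_q'(x)\partial_\xi$, and use the algebraic identity $(q+(1-q)\xi)(1+(q-1)x)=q^2-q+1$ (equivalently the paper's inversion $1+(q-1)x=\frac{q^2-q+1}{q+(1-q)\xi}$) to rewrite the coefficients. Your treatment of $D_1$ is more explicit than the paper's ``similar computation'', and the conceptual sanity check via Proposition~\ref{Prop:qg-Tq-Sq} is a nice addition not present in the original.
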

The behavior of $D_{-1}$ and $D_1$ is reminiscent of Proposition \ref{Prop:qg-Tq-Sq}.
\begin{proof}
Using \smash{$\frac{{\rm d}\xi}{{\rm d}x}=\tfrac{q^2-q+1}{(1+(q-1)x)^2}$} and \smash{$x=\tfrac{\xi+q-1}{q+(1-q)\xi}$}, we get \smash{$1+(q-1)x=\tfrac{q^2-q+1}{q+(1-q)\xi}$}. Hence
\[
D_{-1}(x,q)=(1+(q-1)x)\frac{{\rm d}\xi}{{\rm d}x}\partial_\xi = \frac{q^2-q+1}{1+(q-1)x}\partial_\xi = (q+(1-q)\xi)\partial_\xi = qD_{-1}\big(q^{-1},\xi\big).
\]
The computation for $D_{1}$ is similar. Finally,
\[
D_0(q,x)=(1+(q-1)x)(1+(x-1)q)\partial_x = \big(q^2-q+1\big)g_q(x)\partial_\xi = \big(q^2-q+1\big)\xi\partial_\xi.\tag*{\qed}
\]\renewcommand{\qed}{}
\end{proof}

This proposition indicates that we can use the undeformed operator $x\partial$ together with $D_{\pm 1}$ to get a deformation of $\mathfrak{sl}_2$ which is equivalent to our proposal. The importance of the $q$-rational transition map $g_q$, especially in the light of Proposition \ref{Prop:qg-Tq-Sq}, justifies to use $D_0$ instead of $x\partial$.

\subsection{Heisenberg algebra}
The operators $D_{-1}$ and $g_q$, seen as multiplication operator, give a deformation of the Heisenberg algebra. This strengthens the idea of considering $g_q$ as a deformation of the identity.

\begin{Theorem}
The two operators $D_{-1}$ and $g_q$ satisfy
\[
[D_{-1},g_q]=q+(1-q)g_q.
\]
Hence together with the central element $1$, they define a solvable $3$-dimensional Lie algebra deforming the $3$-dimensional Heisenberg algebra which we recover for $q=1$.
\end{Theorem}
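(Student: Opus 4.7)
The proof plan hinges on the identity $D_{-1}(g_q) = q + (1-q)g_q$ already recorded in the Proposition on the interaction of $g_q$ with $D_{-1}, D_0, D_1$. First I would recall the standard fact that for a first-order differential operator $p(x)\partial$ and a multiplication operator $f(x)$, the commutator is again a multiplication operator: $[p(x)\partial, f(x)] = p(x)f'(x)$. Applying this with $p(x) = 1+(q-1)x$ and $f = g_q$, we get
\[
[D_{-1}, g_q] = (1+(q-1)x)\, g_q'(x) = D_{-1}(g_q),
\]
and by the previously cited formula this equals $q + (1-q)g_q$, which is the bracket relation claimed.

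Next I would verify the Lie algebra structure. Let $L$ be the linear span of $D_{-1}$, $g_q$ and $1$ (as operators on functions). The central element $1$ obviously commutes with everything, so $[D_{-1},1] = [g_q,1] = 0$. The only potentially nontrivial bracket is $[D_{-1},g_q]$, which by the computation above lies in $\operatorname{span}(1, g_q) \subset L$, so $L$ is closed under the bracket. The Jacobi identity is automatic: any triple involving $1$ vanishes (since $1$ is central), and the only triple not involving $1$ is $(D_{-1},g_q,D_{-1})$ or a permutation, for which all three summands vanish by antisymmetry. Alternatively, Jacobi is inherited from the associative algebra of operators.

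For solvability, note that the derived ideal $[L,L]$ is spanned by $q\cdot 1 + (1-q)g_q$, hence is one-dimensional, and its own derived ideal is zero; thus $L$ is (two-step) solvable. Finally, specializing to $q=1$ gives $D_{-1} = \partial$ and $g_q(x) = x$, so the bracket becomes $[\partial, x] = 1$, recovering the canonical commutation relation of the three-dimensional Heisenberg algebra. The only place requiring care is the sign/coefficient bookkeeping when invoking $D_{-1}(g_q)$, but that identity is already proved, so no new computation is needed beyond reading off the commutator formula.
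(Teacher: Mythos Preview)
Your argument is correct and matches the paper's approach almost exactly: the paper also reduces the commutator to $D_{-1}(g_q)$, identifies it as $q+(1-q)g_q$, and deduces solvability from the derived series vanishing at the second step. One caveat on logical ordering: in the paper the identity $D_{-1}(g_q)=q+(1-q)g_q$ is first \emph{established} in the proof of this very theorem (via the direct computation $D_{-1}(g_q)=\tfrac{q^2-q+1}{1+(q-1)x}=q+(1-q)g_q$) and only afterwards recorded in the Proposition you cite, whose own proof says ``we have already seen the first two''; so to avoid circularity you should carry out that one-line computation here rather than invoke the later Proposition.
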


The proof is a simple computation:
\[[D_{-1},g_q]=D_{-1}(g_q)=\frac{q^2-q+1}{1+(q-1)x}=q+(1-q)g_q.
\]
The Lie algebra generated by $(1,g_q,D_{-1})$ is solvable since $D_{-1}$ is not in the image of the Lie bracket. Hence the derived series becomes zero at the second step.

The previous theorem works since there is a nice expression for $D_{-1}(g_q)$. This holds true more generally:
\begin{Proposition}\label{Prop:op-diff-gq}
The function $g_q$ behaves well under the operators $D_{-1}$, $D_0$ and $D_1$:
\begin{align*}
D_0(g_q) &= \big(q^2-q+1\big)g_q, \qquad
D_{-1}(g_q) = q+(1-q)g_q, \qquad
D_1(g_q) = (q-1)g_q+g_q^2.
\end{align*}
\end{Proposition}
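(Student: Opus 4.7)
My approach is to reduce everything to one quotient-rule calculation. Applying the quotient rule to $g_q(x)=(1+(x-1)q)/(1+(q-1)x)$, the numerator
\[
q\bigl(1+(q-1)x\bigr)-\bigl(1+(x-1)q\bigr)(q-1)
\]
simplifies dramatically: the terms linear in $x$ cancel and the constants collapse to $q^2-q+1$. So the key computation is
\[
g_q'(x)=\frac{q^2-q+1}{(1+(q-1)x)^2}.
\]

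The first identity then amounts to recognizing the factors in $D_0=(1+(x-1)q)(1+(q-1)x)\partial$: one factor of $(1+(q-1)x)$ cancels with the denominator of $g_q'$ and what remains is precisely $(q^2-q+1)\,g_q$. (This is also visible directly from the definition of $g_q$ as the eigenfunction of $D_0$ with eigenvalue $q^2-q+1$.) For the second identity, multiplying $g_q'$ by the single factor $(1+(q-1)x)$ gives $D_{-1}(g_q)=(q^2-q+1)/(1+(q-1)x)$, and I would verify $q+(1-q)g_q$ equals this rational function by clearing the denominator $1+(q-1)x$; expanding $q(1+(q-1)x)+(1-q)(1+(x-1)q)$ collapses (again the $x$-terms cancel) to exactly $q^2-q+1$.

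For the third identity I would avoid repeating the derivative and instead observe the operator factorization $D_1=x\,g_q\cdot D_{-1}$, which holds because $x\,g_q\,(1+(q-1)x)=x(1+(x-1)q)$. Applying this to $g_q$ and plugging in the second identity yields
\[
D_1(g_q)=x\,g_q\bigl(q+(1-q)g_q\bigr).
\]
To match the desired $(q-1)g_q+g_q^2=g_q(g_q+q-1)$, I cancel $g_q$ and reduce to showing $x(q+(1-q)g_q)=g_q+q-1$. Rearranging, this is equivalent to $(1+(q-1)x)\,g_q=1+(x-1)q$, which is nothing but the defining equation of $g_q$. The main ``obstacle'' is really only bookkeeping: the three formulas are three repackagings of the single calculation of $g_q'$, so I would organize the proof around that one derivative and let the three cases fall out in sequence.
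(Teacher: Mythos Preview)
Your argument is correct. The quotient-rule computation of $g_q'$ is right, and the first two identities follow exactly as you say; these match what the paper does (the paper also records $D_{-1}(g_q)=(q^2-q+1)/(1+(q-1)x)$ and then rewrites it as $q+(1-q)g_q$, and the $D_0$-statement is the defining eigenfunction property).

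The genuine difference is in the third identity. The paper exploits the structural definition $D_1=S_q\circ D_{-1}\circ S_q$ together with the earlier relation $g_q(S_q(x))=-q/g_q(x)$ from Proposition~\ref{Prop:qg-Tq-Sq}: one computes $D_1(g_q)=S_q\,D_{-1}(-q/g_q)$, applies the chain rule and the already-known $D_{-1}(g_q)$, and then precomposes with $S_q$ once more. Your route instead notices the purely multiplicative factorization $D_1=x\,g_q\cdot D_{-1}$ (which is immediate from the formulas for $D_{\pm 1}$), feeds in the second identity, and reduces the claim to the tautology $(1+(q-1)x)\,g_q=1+(x-1)q$. Your approach is shorter and entirely self-contained---it does not invoke $S_q$ or Proposition~\ref{Prop:qg-Tq-Sq}---while the paper's argument has the conceptual virtue of showing that the $D_1$-formula is forced by the $S_q$-symmetry that defines $D_1$ in the first place.
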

\begin{proof}
We only have to prove the last statement since we have already seen the first two. For that, we use the relation $g_q(S_q(x))=-q/g_q(x)$, see Proposition \ref{Prop:qg-Tq-Sq}. We get
\[
D_1(g_q)=S_qD_{-1}S_q(g_q)=S_qD_{-1}(-q/g_q)=S_q\bigl(-qg_q^{-2}(q+(1-q)g_q)\bigr),
\]
where we first used that $S_q$ acts by precomposition, then Proposition \ref{Prop:qg-Tq-Sq} and finally the expression for $D_{-1}(g_q)$. Since $S_q$ acts by precomposition applying \ref{Prop:qg-Tq-Sq} again concludes:
\[
D_1(g_q)S_q\bigl(-qg_q^{-2}(q+(1-q)g_q)\bigr)=g_q^2+(q-1)g_q.\tag*{\qed}
\]\renewcommand{\qed}{}
\end{proof}

We can use this proposition to express one operator in terms of another via the relation~\smash{$D_i=\tfrac{D_i(g_q)}{D_j(g_q)}D_j$}
for all $i,j\in \{-1,0,1\}$. This holds true since these differential operators are of order 1.

\section{Deformed Witt algebra}\label{Sec:Witt}

Now that we have deformed the differential operators $\partial$, $x\partial$ and $x^2\partial$, we can do the same for all~$x^n\partial$ for $n\in\Z$. These are a realization of the \emph{Witt algebra}, the Lie algebra of complex polynomial vector fields on the circle (the centerless Virasoro algebra). Putting $\ell_n=x^{n+1}\partial$, the Lie algebra structure is given by
\[\label{Eq-witt-algebra}
[\ell_n,\ell_m]=(m-n)\ell_{n+m}.
\]

To get a deformation of the Witt algebra, we define for $n>1$:
\begin{align*}
D_n &= g_q^{n-1}D_1, \qquad
D_{-n} = \big(qg_q^{-1}\big)^{n-1}D_{-1},
\end{align*}
where $g_q^{-1}=1/g_q$ denotes the inverse for multiplication (not composition).

\begin{Proposition}\label{Prop:mult-by-gq}
The operators $D_n$ behave nicely when multiplied by $g_q$. By definition we have~${g_qD_n=D_{n+1}}$ for $n\geq 1$ and $g_qD_{-n}=qD_{-n+1}$ for $n\geq 2$. In addition,
\begin{align*}
g_qD_0 &= (1-q)D_0+\big(q^2-q+1\big)D_1,\qquad
g_qD_{-1}= D_0+(1-q)D_1.
\end{align*}
Similarly, there is a nice behavior when multiplied by $qg_q^{-1}$. By definition $qg_q^{-1}D_{-n}=D_{-n-1}$ for~${n\geq 1}$ and $qg_q^{-1}D_n=qD_{n-1}$ for $n\geq 2$. In addition,
\begin{align*}
qg_q^{-1}D_0 &= (q-1)D_0+\big(q^2-q+1\big)D_{-1},\qquad
qg_q^{-1}D_{1}= D_0+(q-1)D_{-1}.
\end{align*}
\end{Proposition}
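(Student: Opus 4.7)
The plan is a direct computation, exploiting one small algebraic identity that makes all the non-trivial cases collapse immediately. The identity is
\[
g_q\cdot\big(1+(q-1)x\big) = 1+(x-1)q \qquad\text{and equivalently}\qquad qg_q^{-1}\cdot\big(1+(x-1)q\big) = q\big(1+(q-1)x\big),
\]
which is just the definition \eqref{Eq-def-g_q} of $g_q$. Because $D_{-1}$, $D_0$, $D_1$ are all of the form $(\text{product of these two linear factors})\cdot\partial$, multiplying them on the left by $g_q$ or $qg_q^{-1}$ just trades one linear factor for the other (possibly picking up a scalar $q$), and the resulting first-order operator is then re-expressed as an $\R[q]$-linear combination of $D_{-1}$, $D_0$, $D_1$.

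The recipe proceeds as follows. First, dispose of the four ``by definition'' identities: $g_qD_n = g_q\cdot g_q^{n-1}D_1 = g_q^nD_1 = D_{n+1}$ for $n\geq 1$, and $g_qD_{-n} = g_q(qg_q^{-1})^{n-1}D_{-1} = q(qg_q^{-1})^{n-2}D_{-1} = qD_{-n+1}$ for $n\geq 2$; the two $qg_q^{-1}$-statements are symmetric. Next, for $g_qD_{-1}$, use the identity to get $g_qD_{-1} = g_q(1+(q-1)x)\partial = (1+(x-1)q)\partial$, and then verify $D_0+(1-q)D_1 = (1+(x-1)q)[(1+(q-1)x)+(1-q)x]\partial = (1+(x-1)q)\partial$. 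For $g_qD_0 = g_q(1+(x-1)q)D_{-1}$, apply the identity once more to obtain $(1+(x-1)q)^2\partial$, and expand $(1-q)D_0+(q^2-q+1)D_1 = (1+(x-1)q)[(1-q)(1+(q-1)x)+(q^2-q+1)x]\partial$; the bracketed expression simplifies to $(1-q)+qx = 1+(x-1)q$, yielding the desired $(1+(x-1)q)^2\partial$. The two $qg_q^{-1}$-statements are handled in exactly the same manner, using $qg_q^{-1}\cdot(1+(x-1)q) = q(1+(q-1)x)$: one finds $qg_q^{-1}D_1 = qx(1+(q-1)x)\partial$ and $qg_q^{-1}D_0 = q(1+(q-1)x)^2\partial$, and then matches each against the claimed right-hand side by a direct polynomial identity in $x$.

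There is essentially no obstacle here — the only thing to be careful about is that the coefficients of the final linear combinations really lie in $\R[q]$ (equivalently, that the simplifications give exactly the stated constants $1-q$, $q-1$, $q^2-q+1$), which follows from the telescoping of the constant and $x$-terms observed above. Once the four non-trivial identities are checked, the proposition is complete.
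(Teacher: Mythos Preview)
Your argument is correct. The ``by definition'' parts are trivial, and your reduction of the four non-trivial identities to the single observation $g_q\,(1+(q-1)x)=1+(x-1)q$, followed by straightforward polynomial matching in $x$, is clean and complete.

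The paper organizes the verification a little differently. For $g_qD_0=(1+(x-1)q)^2\partial$ it does not re-expand the right-hand side but instead recalls from the proof of Theorem~\ref{Prop:deformed-sl2} that $(1+(x-1)q)^2\partial=[D_0,D_1]$, so the identity follows immediately from the commutator formula already established there. For the two $qg_q^{-1}$-identities the paper avoids a second round of polynomial computation altogether: it takes suitable $\R[q]$-linear combinations of the already-proven $g_qD_0$ and $g_qD_{-1}$ formulas so that the right-hand side collapses to $qD_1$ (respectively $qD_0$), and then simply divides both sides by $g_q$. Your approach is more uniform and self-contained; the paper's is slightly more economical in that the second half is deduced from the first by inverting a $2\times2$ linear system rather than by fresh computation. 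Both are equally valid and of comparable length.
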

\begin{proof}
From the definitions, we get $g_qD_0=(1+(x-1)q)^2\partial$. From Proposition~\ref{Prop:deformed-sl2} and its proof, we see that this is $[D_0,D_1]$. Therefore, $g_qD_0=\big(q^2-q+1\big)D_1+(1-q)D_0$. A direct computation also gives $g_qD_{-1}=(1+(x-1)q)\partial=D_0+(1-q)D_1$.

For the second half, note that
\[
g_qD_0+(q-1)g_qD_{-1}=\big(q^2-q+1-(q-1)^2\big)D_1=qD_1.
\]
 Dividing by $g_q$ gives $qg_q^{-1}D_1=D_0+(q-1)D_{-1}$. Similarly,
 \[
 (q-1)g_qD_0+\big(q^2-q+1\big)D_{-1}=qD_0,\] so dividing by $g_q$ gives $qg_q^{-1}D_0=(q-1)D_0+\big(q^2-q+1\big)D_{-1}$.
\end{proof}

Using Propositions \ref{Prop:op-diff-gq} and \ref{Prop:mult-by-gq}, we get the bracket relations of all $D_n$:

\begin{Theorem}\label{Prop-deformed-Witt}
The $(D_n)_{n\in\mathbb{Z}}$ form a Lie algebra with bracket given by $($with $n,r >0)$:
\begin{gather*}
[D_0,D_n] = n\big(q^2-q+1\big)D_n+\big(q^2-q+1\big)\displaystyle\sum_{k=1}^{n-1}(1-q)^kD_{n-k} + (1-q)^nD_0, \\
[D_0,D_{-n}] = -n\big(q^2-q+1\big)D_{-n}-\big(q^2-q+1\big)\displaystyle\sum_{k=1}^{n-1}(q-1)^kD_{-n+k}-(q-1)^nD_0,\\
[D_n,D_{n+r}] =rD_{2n+r}+(q-1)rD_{2n+r-1},\\
[D_{-n},D_{-n-r}] =-rD_{-2n-r}+(q-1)rD_{-2n-r+1},\\
[D_{-n},D_n] = 2nq^{n-1}D_0+(2n-1)q^{n-1}(q-1)(D_{-1}-D_1),\\
[D_{n+r},D_{-n}] = (q-1)q^{n-1}(2n+r-1)D_{r+1}-\big(q^2+(2n+r-2)q+1\big)q^{n-1}D_r,\\
\phantom{[D_{n+r},D_{-n}] =}{} -q^{n-1}\big(q^2-q+1\big)\displaystyle\sum_{k=1}^{r-1}(1-q)^kD_{r-k}-(1-q)^rq^{n-1}D_0,\\
[D_{n},D_{-n-r}] = -(q-1)q^{n-1}(2n+r-1)D_{-r-1}-\big(q^2+(2n+r-2)q+1\big)q^{n-1}D_{-r},\\
\phantom{[D_{n},D_{-n-r}] =}{} -q^{n-1}\big(q^2-q+1\big)\displaystyle\sum_{k=1}^{r-1}(q-1)^kD_{-r+k}-(q-1)^rq^{n-1}D_0.
\end{gather*}
For $q=1$, one recovers the Witt algebra.
\end{Theorem}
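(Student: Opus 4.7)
Every $D_n$ is a first-order differential operator of the form $p_n(x)\partial$, so every commutator $[D_n,D_m]$ is again first-order and the Jacobi identity follows automatically from the realization. My basic tool is the Leibniz rule for first-order operators: if $A=p_A\partial$ and $B=p_B\partial$, then for functions $f,g$
\[
[fA,gB] = f\cdot A(g)\cdot B - g\cdot B(f)\cdot A + fg\cdot [A,B],
\]
where $A(g)=p_A g'$ denotes the function obtained by applying $A$ to $g$. I will apply this identity systematically with $A,B\in\{D_{-1},D_0,D_1\}$ and $f,g$ powers of $g_q^{\pm 1}$. The inputs are the $\mathfrak{sl}_2$ brackets from Theorem~\ref{Prop:deformed-sl2}, the formulas for $D_i(g_q)$ from Proposition~\ref{Prop:op-diff-gq} (from which the chain rule yields $D_{-1}(g_q^m)=mqg_q^{m-1}+m(1-q)g_q^m$ and $D_1(g_q^m)=m(q-1)g_q^m+m g_q^{m+1}$), and the shift relations of Proposition~\ref{Prop:mult-by-gq}.

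The four \emph{diagonal} families fall out directly. For $[D_0,D_n]$ I induct on $n\geq 1$: writing $D_n=g_qD_{n-1}$ yields $[D_0,D_n]=D_0(g_q)D_{n-1}+g_q[D_0,D_{n-1}]=(q^2-q+1)D_n+g_q[D_0,D_{n-1}]$, and the inductive hypothesis combined with $g_qD_k=D_{k+1}$ for $k\geq 1$ and $g_qD_0=(1-q)D_0+(q^2-q+1)D_1$ from Proposition~\ref{Prop:mult-by-gq} delivers the claimed formula, the $(1-q)^nD_0$ tail arising from $n$ successive absorptions of the $D_0$ component. The mirror $[D_0,D_{-n}]$ is identical, with $qg_q^{-1}$ in place of $g_q$. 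For $[D_n,D_{n+r}]$ I apply Leibniz with $A=B=D_1$, $f=g_q^{n-1}$, $g=g_q^{n+r-1}$; the $[D_1,D_1]$ piece vanishes and the two derivative terms telescope via the $D_1(g_q^m)$ formula to $rD_{2n+r}+(q-1)rD_{2n+r-1}$. The mirror $[D_{-n},D_{-n-r}]$ is identical, the compensating power of $q$ being supplied by the definition of $D_{-m}$. Finally $[D_{-n},D_n]$: the ``outer'' piece is $[D_{-1},D_1]$ from Theorem~\ref{Prop:deformed-sl2}, while the two derivative terms reduce, after substitution of $D_{\pm 1}(g_q^{\pm(n-1)})$, to scalar multiples of $g_qD_{-1}$ and $qg_q^{-1}D_1$ which collapse to single $D_k$'s by Proposition~\ref{Prop:mult-by-gq}; collecting coefficients produces $2nq^{n-1}D_0+(2n-1)q^{n-1}(q-1)(D_{-1}-D_1)$.

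The hardest cases are $[D_{n+r},D_{-n}]$ and its mirror, where Leibniz produces terms of the form $g_q^rD_{-1}$ and $g_q^{-r}D_1$ which for $r\geq 2$ are \emph{not} single $D_k$'s. To handle them I introduce $E_m:=g_q^mD_{-1}$ and unfold recursively via $g_qD_{-1}=D_0+(1-q)D_1$ and Proposition~\ref{Prop:mult-by-gq}; a short induction on $m$ yields
\[
E_m = (1-q)^{m-1}D_0 + (q^2-q+1)\sum_{k=1}^{m-1}(1-q)^{m-1-k}D_k + (1-q)D_m \qquad (m\geq 1),
\]
together with an analogous expansion for $(qg_q^{-1})^mD_1$. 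Substituting these into the Leibniz expansion of $[g_q^{n+r-1}D_1,g_q^{-(n-1)}D_{-1}]$ and restoring the prefactor $q^{n-1}$ from the definition of $D_{-n}$, the coefficients of $D_{r+1},D_r,\dots,D_1,D_0$ regroup into the sum stated in the theorem. The conceptual step is brief; the main obstacle is purely combinatorial bookkeeping to track the telescoping of several geometric sums in $(1-q)$. No separate Jacobi check is needed, since the entire algebra sits inside the algebra of differential operators on $\mathbb{P}^1$.
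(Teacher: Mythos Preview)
Your proposal is correct and follows essentially the same strategy as the paper's proof: both rely on the Leibniz identity $[fA,gB]=fA(g)B-gB(f)A+fg[A,B]$ for first-order operators, the $\mathfrak{sl}_2$ brackets of Theorem~\ref{Prop:deformed-sl2}, the derivative formulas of Proposition~\ref{Prop:op-diff-gq}, and the shift relations of Proposition~\ref{Prop:mult-by-gq}, with the Jacobi identity inherited from the differential-operator realization. The only noteworthy difference is organizational: for $[D_{-n},D_n]$ the paper proceeds by induction on $n$ (passing from $[D_{-n},D_n]$ to $[D_{-n-1},D_{n+1}]$ via $D_{\pm(n+1)}=g_q^{\pm 1}D_{\pm n}$), whereas you compute this bracket in one shot by Leibniz; and for the mixed brackets the paper first reduces $[D_a,D_{-b}]$ to $q^{b-1}g_q^{a-b}$ times a fixed combination of $D_0,D_{\pm 1}$ and then expands $g_q^rD_0$ inductively, while you expand $E_m=g_q^mD_{-1}$ directly---these are equivalent bookkeeping choices leading to the same telescoping sums.
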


It is clear that the bracket of the operators $D_n$ satisfies the Jacobi identity since these operators come from a realization as differential operators. We only have to check the bracket relations, which uses induction and all properties between $g_q$ and $D_{-1}$, $D_0$, $D_1$.

\begin{proof}
We prove the first relation by induction on $n$. The case $n=1$ is true by Proposition~\ref{Prop:deformed-sl2}. Then for $n>1$,
\begin{align*}
[D_0,D_n] ={}& [D_0,g_qD_{n-1}] = D_0(g_q)D_{n-1}+g_q[D_0,D_{n-1}]\\
={}& \big(q^2-q+1\big)g_qD_{n-1}+g_q(n-1)\big(q^2-q+1\big)D_{n-1} \\
& +g_q\left(\big(q^2-q+1\big)\displaystyle\sum_{k=1}^{n-2}(1-q)^kD_{n-1-k}+(1-q)^{n-1}D_0\right)\\
={}&n\big(q^2-q+1\big)D_n+\big(q^2-q+1\big)\displaystyle\sum_{k=1}^{n-1}(1-q)^kD_{n-k}+(1-q)^nD_0,
\end{align*}
where we used that $g_q$ is an eigenfunction of $D_0$, and Proposition \ref{Prop:mult-by-gq}.
The second statement is a similar computation.
The third relation comes as follows:
\begin{align*}
[D_n,D_{n+r}]&=[D_n,g_q^rD_n]=D_n(g_q^r)D_n=rg_q^{r-1}g_q^{n-1}D_1(g_q)D_n\\
&=rg_q^{r+n-2}\big(g_q^2+(q-1)g_q\big)D_n = rD_{2n+r}+r(q-1)D_{2n+r-1},
\end{align*}
where we used Proposition \ref{Prop:op-diff-gq} for $D_1(g_q)$. The fourth bracket is a similar computation.
To prove the fifth relation, we use induction on $n$ again. The initial $n=1$ is done by Proposition~\ref{Prop:deformed-sl2}. Then for $n\geq 1$,
\begin{align*}
[D_{-n-1},D_{n+1}]={}& \big[qg_q^{-1}D_{-n},g_qD_n\big]=qg_q^{-1}D_{-n}(g_q)D_n+q[D_{-n},D_n]-qg_qD_n\big(g_q^{-1}\big)D_{-n}\\
={}& q^ng_q^{-n}(q+(1- q)g_q)D_n+q\big(2nq^{n-1}D_0\\
&+(2n- 1)q^{n-1}(q-1)(D_{-1}-D_1)\big) +qg_q^{n-2}\big(g_q^2+(q-1)g_q\big)D_{-n}\\
={}& 2nq^nD_0+q^n(2n(q-1)+q_q)D_{-1}-q^n\big(2n(q-1)-qg_q^{-1}\big)D_1\\
={}&(2n+2)q^nD_0+(2n+1)q^n(q-1)(D_{-1}-D_1),
\end{align*}
where we used several times Propositions \ref{Prop:op-diff-gq} and \ref{Prop:mult-by-gq}. Finally, for the last two brackets, we start from (where $a,b>0$)
\begin{align}
[D_a,D_{-b}]&=\big[g_q^{a-1}D_1,\big(qg_q^{-1}\big)^{b-1}D_{-1}\big] \nonumber\\
&=q^{b-1}\big(g_q^{a-1}D_1\big(g_q^{1-b}\big)D_{-1}-g_q^{1-b}D_{-1}\big(g_q^{a-1}\big)D_1+g_q^{a-b}[D_1,D_{-1}]\big)\nonumber\\
&=q^{b-1}g_q^{a-b}(-(a+b)D_0+(a+b-1)(q-1)(D_1-D_{-1})).\label{Eq:aux.34}
\end{align}
An easy induction gives for $r>0$,
\begin{equation}\label{Eq:aux.56}
g_q^rD_0=\big(q^2-q+1\big) \sum_{k=0}^{r-1}(1-q)^kD_{r-k}+(1-q)^rD_0.
\end{equation}
Also we get $g_q^rD_{-1}=g_q^{r-1}D_0+(1-q)D_r$, where we can use equation \eqref{Eq:aux.56} to express the first term. Similar results hold for $\big(qg_q^{-1}\big)^rD_0$ and $\big(qg_q^{-1}\big)^rD_1$. Putting $a=n+r$ and $b=n$ in equation \eqref{Eq:aux.34} and using \eqref{Eq:aux.56} gives the bracket $[D_{n+r},D_{-n}]$. Putting $a=n$ and $b=n+r$ gives in a similar way the last bracket $[D_{n},D_{-n-r}]$.
\end{proof}

\begin{Remark}
Regarding Remark \ref{Rk:mod-square}, we could try to simplify the defining relations of the~$q$-deformed Witt algebra by considering a formal parameter $q$ satisfying $(q-1)^2=0$ (and then forget about this relation again). In contrast to the~$q$-deformed $\mathfrak{sl}_2$, the result here is not a Lie algebra anymore. The Jacobi identity does not hold exactly, but only modulo $(q-1)^2=0$.
\end{Remark}

\section{M\"obius transformations}\label{Sec:Moebius}

The differential operators $\partial$, $x\partial$, $x^2\partial$ can be interpreted in at least three different ways: first as differential operators on $\mathbb{P}^1$ written in one chart (this was our approach). Second they can be seen as complex vector fields on the circle $\mathbb{S}^1\subset \C$ (this approach was used for the Witt algebra). Third, a Lie algebra can be realised as Killing vector fields on the associated symmetric space of non-compact type. For $\mathfrak{sl}_2$ this symmetric space is the hyperbolic plane $\H^2$.

In this section, we integrate the operators $D_{-1}$, $D_0$ and $D_1$ seen as vector fields of $\H^2$. The result gives interesting M\"obius transformations with $q$-parameter. In the Taylor expansion around~${q\to 1}$ (the ``semi-classical limit''), we recover the deformed translation $T_q$. Conjecturally there should be a $q$-deformation of $\H^2$ on which these transformations act, such that the boundary can be identified with the $q$-deformed real numbers of \cite{morier2022q}.

\subsection{Classical setting}
Consider first the classical setup with the operators $\partial$, $x\partial$ and $x^2\partial$. These are Killing vector fields on the hyperbolic plane $\H^2$, whose integration determines isometries of $\H^2$. Here, we consider~${\H^2\subset \C}$ in the upper half-plane model and use the coordinate $x\in\C$.

To start, consider the case of the vector field $V=\partial=\tfrac{\partial}{\partial x}$. A curve $\gamma$ integrates this vector field iff $\gamma'(t)=V(\gamma(t))=1$ for all $t\in \R$ (where we identified 1 with the constant vector field~$\partial$). With initial condition $\gamma(0)=x$ we get $\gamma(t)=x+t$. We should think of this as a function $\gamma_x(t)$ of the initial condition. For time 1, we get the translation $\gamma_x(1)=x+1=T(x)$.

Another important case is $x\partial$, for which we have to solve $\gamma'(t)=\gamma(t)$. With initial condition~${\gamma(0)=x}$, we obviously get $\gamma(t)={\rm e}^t x$. The function $x\mapsto {\rm e}^t x$ is the hyperbolic isometry of~$\H^2$ associated to the geodesic joining 0 to $\infty$. Its matrix is given by
\begin{equation}\label{Eq:dilatation}
\begin{pmatrix} {\rm e}^{t/2} & 0 \\ 0 & {\rm e}^{-t/2} \end{pmatrix}.
\end{equation}

We can immediately generalize to the generators of the Witt algebra. Consider the operator~$x^n \partial$ with $n\in \mathbb{Z}$, $n\neq 1$. A curve $\gamma$ integrates the associated vector field if
$\gamma'(t)=\gamma(t)^{n}$. The solution with initial condition $\gamma(0)=x$ is given by
\[
\gamma_x(t)=\frac{x}{\big(1-(n-1)tx^{n-1}\big)^{1/(n-1)}}.
\]
Apart from $n=0$ and $n=2$, the associated transformations in $x$ are not M\"obius transformations. We get M\"obius transformations though when passing to a ramified covering. Putting $y=x^{n-1}$, we get
\[
\gamma_x(t)^{n-1}=\frac{y}{1-(n-1)ty}.
\]

\subsection{Deformed transformations}

We repeat the method of the previous subsection to deduce the transformations associated to~$D_{-1}$,~$D_0$ and $D_1$. Since these operators are still of the form $p(x)\partial$ with $p$ a polynomial in $x$ of degree at most 2, the vector fields $D_i$ are still Killing vector fields, so their integration gives M\"obius transformations.

Start with $D_{-1}\!=\!(1+(q-1)x)\partial$. The associated differential equation is ${\gamma'(t)=1+(q-1)\gamma(t)}$ with initial condition $\gamma(0)=x$. Solving this equation is standard: first one solves the homogeneous equation, then one uses the variation of the constant to finally get
\[
\gamma(t)=-\frac{1}{q-1}+\left(x+\frac{1}{q-1}\right){\rm e}^{(q-1)t}.
\]

For $t=1$, we get the associated map $x\mapsto -\tfrac{1}{q-1}+\big(x+\tfrac{1}{q-1}\big){\rm e}^{q-1}$. The Taylor expansion around $q-1$ at order 1 gives
\[
x\mapsto -\frac{1}{q-1}+\left(x+\frac{1}{q-1}\right)q = qx+1,
\]
which is nothing but $T_q(x)$.
For a general time $t$, the same procedure gives $x\mapsto (1-t+qt)x+t$. To sum up:
\begin{Proposition}
The time $1$ flow of the operator $D_{-1}$ seen as vector field on $\H^2$ is the affine map \smash{$x\mapsto -\tfrac{1}{q-1}+\big(x+\tfrac{1}{q-1}\big){\rm e}^{q-1}$} whose Taylor expansion at order $1$ in $q-1$ is $T_q$.
\end{Proposition}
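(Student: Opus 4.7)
The plan is to solve a standard first-order linear ODE and then Taylor-expand its time-$1$ value. By definition, integrating $D_{-1}$ as a vector field means finding the curve $\gamma(t)$ with $\gamma'(t)=1+(q-1)\gamma(t)$ and initial condition $\gamma(0)=x$. First I would rewrite this as $\gamma'(t)-(q-1)\gamma(t)=1$, a linear ODE with constant coefficients whose homogeneous solutions are multiples of ${\rm e}^{(q-1)t}$ and with constant particular solution $-1/(q-1)$. Matching the initial condition $\gamma(0)=x$ fixes the integration constant to $x+1/(q-1)$, giving
\[
\gamma(t)=-\frac{1}{q-1}+\left(x+\frac{1}{q-1}\right){\rm e}^{(q-1)t}.
\]
Setting $t=1$ yields the claimed affine map. (Equivalently, one can verify directly that this formula satisfies both the ODE and the initial condition; there is no subtlety since the coefficient is constant in $t$, so the existence of a unique solution is immediate.)

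For the second claim, I would Taylor-expand ${\rm e}^{q-1}=1+(q-1)+O\big((q-1)^2\big)$ around $q=1$ and substitute:
\[
-\frac{1}{q-1}+\left(x+\frac{1}{q-1}\right)(1+(q-1))+O(q-1)^2=x+1+(q-1)x+O(q-1)^2=qx+1+O(q-1)^2.
\]
The $1/(q-1)$ singularities cancel exactly, and the first-order truncation is precisely $T_q(x)=qx+1$.

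There is essentially no obstacle here: the ODE is linear with constant coefficients, the initial condition is explicit, and the Taylor expansion is one line. The only small care required is checking that the singular term $1/(q-1)$ in the closed-form solution is truly removable at $q=1$, which it is because the constant contribution and the ${\rm e}^{(q-1)t}$-expansion cancel at order $(q-1)^{-1}$; the resulting expression is regular and recovers the classical translation $x\mapsto x+1$ when $q\to 1$, consistent with $T_q\to T$.
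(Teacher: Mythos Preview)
Your proof is correct and essentially identical to the paper's own argument: both set up the linear ODE $\gamma'(t)=1+(q-1)\gamma(t)$ with $\gamma(0)=x$, solve it by the standard homogeneous-plus-particular method to obtain $\gamma(t)=-\tfrac{1}{q-1}+\big(x+\tfrac{1}{q-1}\big){\rm e}^{(q-1)t}$, and then Taylor-expand ${\rm e}^{q-1}\approx 1+(q-1)=q$ at $t=1$ to recover $T_q$. The only cosmetic difference is that the paper phrases the solution step as ``variation of the constant'' while you directly spot the constant particular solution $-1/(q-1)$.
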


For the operator $D_1$, it is not necessary to do any computation since $D_1=S_qD_{-1}S_q$. We can simply conjugate by $S_q$ the previous computations. In particular, the associated transformation in the Taylor expansion is $S_qT_qS_q$.

Consider now the operator $D_0=(1+(q-1)x)(1+(x-1)q)\partial$. The associated differential equation reads
\[
\gamma'(t)=1-q+\bigl(-1+3q-q^2\bigr)\gamma+q(q-1)\gamma^2,
\]
which is a Ricatti equation.

To solve a Ricatti equation, put $a=1-q$, $b=-1+3q-q^2$, $c=q(q-1)$ and introduce the new function $u$ such that $c\gamma(t)=-u'(t)/u(t)$. Then $u$ satisfies $u''(t)-bu'(t)+acu(t) = 0$. The discriminant has the nice expression $b^2-4ac=\big(q^2-q+1\big)^2$. The two roots of the characteristic equation are $q$ and $-(q-1)^2$. Hence we get $u(t)=C_1 {\rm e}^{qt}+C_2 {\rm e}^{-(q-1)^2t}$,
where $C_1$, $C_2$ are two constants. Since $\gamma=-u'/(cu)$, we can scale $C_1$ and $C_2$ by the same number without changing~$\gamma$. Putting $C_1=1-q$, we get
\[
\gamma(t)=\frac{{\rm e}^{qt}+C_2 \frac{q-1}{q}{\rm e}^{-(q-1)^2t}}{(1-q){\rm e}^{qt}+C_2 {\rm e}^{-(q-1)^2t}}.
\]
The initial condition $\gamma(0)=x$ gives
\[
C_2=\frac{q(1-q)x-q}{q-1-qx}.
\]
We already see that $\gamma_x(t)$ is a M\"obius transformation in $x$ since $C_2$ is. For time $t=1$, we get the following.

\begin{Proposition}
Integrating to time $t=1$ the operator $D_0$ seen as vector field in $\H^2$ gives the M\"obius transformation
\[
\gamma_x(1)=\frac{\big(q{\rm e}^q+(q-1)^2{\rm e}^{-(q-1)^2}\big)x+(1-q)\big({\rm e}^q-{\rm e}^{-(q-1)^2}\big)}{q(1-q)\big({\rm e}^q-{\rm e}^{-(q-1)^2}\big)x+(1-q)^2{\rm e}^q+q{\rm e}^{-(q-1)^2}}.
\]
\end{Proposition}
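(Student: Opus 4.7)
The plan is to pick up directly from the formula for $\gamma(t)$ and the initial-condition value of $C_2$ derived in the paragraph immediately preceding the statement. All conceptual work (Riccati $\to$ linear ODE via $c\gamma = -u'/u$, factoring the characteristic polynomial with discriminant $(q^2-q+1)^2$, matching $\gamma(0)=x$) is already in place; only the substitution $t=1$ and an algebraic simplification remain.

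Concretely I would substitute $t = 1$ into
\[
\gamma(t) = \frac{{\rm e}^{qt} + C_2 \frac{q-1}{q} {\rm e}^{-(q-1)^2 t}}{(1-q){\rm e}^{qt} + C_2 {\rm e}^{-(q-1)^2 t}}, \qquad C_2 = \frac{q(1-q)x - q}{q - 1 - qx},
\]
and then clear $C_2$ by multiplying numerator and denominator by $q(q-1-qx)$. Both numerator and denominator then become affine in $x$ with coefficients that are $\mathbb{Z}$-linear combinations of ${\rm e}^q$ and ${\rm e}^{-(q-1)^2}$. Regrouping by powers of $x$ and by the two exponentials should reproduce the stated M\"obius transformation, possibly after cancelling a harmless common factor.

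A faster alternative, which avoids some sign-tracking, is to exploit that the flow acts by M\"obius transformations (since $D_0 = p(x)\partial$ with $\deg p \leq 2$) and to identify $\gamma_x(1)$ from three of its values. The two zeros of $(1+(q-1)x)(1+(x-1)q)$ are fixed points of the flow, giving two conditions for free; a third convenient evaluation (for instance at $x = 0$, where $C_2 = q/(1-q)$ and the formula collapses) then pins down the M\"obius matrix up to scale.

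The main obstacle is pure bookkeeping --- no new ideas are needed beyond the Riccati reduction already carried out. A good sanity check for the claimed formula is the specialization $q = 1$: the four stated coefficients reduce to those of $\bigl(\begin{smallmatrix} {\rm e} & 0 \\ 0 & 1 \end{smallmatrix}\bigr)$, giving $\gamma_x(1) = {\rm e}\,x$, in agreement with integrating $D_0|_{q=1} = x\partial$ to $x \mapsto {\rm e}^t x$ as in~\eqref{Eq:dilatation}.
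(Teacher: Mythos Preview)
Your proposal is correct and follows exactly the paper's approach: the Riccati reduction, the formula for $\gamma(t)$, and the value of $C_2$ are already derived in the text preceding the proposition, and the paper simply records the outcome of the substitution $t=1$ that you spell out (your clearing of $C_2$ by $q(q-1-qx)$ indeed produces a common factor $-q$ in numerator and denominator that cancels to give the stated matrix). The alternative via fixed points and one extra value is a nice shortcut, but it is not what the paper does.
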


If we Taylor expand $\gamma_x(1)$ around $q-1$ to order 1, we get a quadratic polynomial in $x$. In order to keep a M\"obius transformation, we Taylor expand all entries of the associated $2\times 2$ matrix to order 1 in $q-1$. The result is
\[
W_q := \begin{pmatrix} e(1-2q) & (e-1)(q-1) \\ (e-1)(q-1) & -q\end{pmatrix},\]
where we used $q^2=2q-1$ coming from the Taylor expansion.
We see that $q=1$ gives the transformation $x\mapsto ex$.

A similar computation with arbitrary time $t$ gives
\[
W_q^t = \begin{pmatrix} {\rm e}^t(t-qt-q) & ({\rm e}^t-1)(q-1) \\ ({\rm e}^t-1)(q-1) & -q\end{pmatrix},
\]
which for $q=1$ gives the transformation $x\mapsto {\rm e}^t x$ from equation \eqref{Eq:dilatation}.

\subsection[Speculations about a q-deformed hyperbolic plane]{Speculations about a $\boldsymbol{q}$-deformed hyperbolic plane}

The above computations seem to indicate the existence of a $q$-deformed version of the hyperbolic plane $\H^2_q$ on which the transformations $T_q$, $S_q$, $g_q$ and $W_q$ act. A similar idea is developed in \cite{bapat2023q} where a compactification of the space of stability conditions for type $A_2$ is constructed.

The transformation $S_q(x)=-1/(qx)$ has only one fixed point given by $iq^{-1/2}$. This equals~$[i]_q$, the $q$-deformed version of $i$ from \cite[formula~(9)]{ovsienko2021towards}.
The translation $T_q(x)=qx+1$ has two fixed points at the (usual) boundary at infinity, given by $\infty$ and $1/(1-q)$. However, we expect the boundary of $\H^2_q$ to be $\R_q\cup \{\infty\}$, where $\R_q$ denotes the $q$-reals. On $\R_q$, the transformation $T_q$ has no fixed point since $T_q[x]_q=[x+1]_q$.

An important role should play the $q$-rational transition map $g_q(x)=\frac{1+(x-1)q}{1+(q-1)x}$. Since it deforms the identity, there are strictly more transformations in the deformed setting.
For~${q\neq 1}$, $g_q$~is an elliptic transformation with only fixed point on $\H^2$ given by \smash{$\frac{1+{\rm i}\sqrt{3}}{2}$} which is independent of $q$. In \cite[Part~2.3]{ovsienko2021towards}, it is shown that this complex number stays itself under $q$-deformation.
Note that both transformations $g_q$ and $T_qS_q$ are rotations around the same center. Hence they commute. Similarly, the matrix of $g_q^{-1}$ anti-commutes with the matrix of $S_q$.

These links between $q$-deformed numbers and the $q$-deformed $\mathfrak{sl}_2$-algebra are intriguing and might point towards a deeper relation.

\subsection*{Acknowledgements}

I warmly thank Valentin Ovsienko and Sophie Morier-Genoud for inspiration, many suggestions and fruitful exchanges, and Peter Smillie and Vladimir Fock for helpful discussions. I also thank the anonymous referees for their remarks improving the paper. I gratefully acknowledge support from the University of Heidelberg where this work has been carried out, in particular under ERC-Advanced Grant 101018839 and Deutsche Forschungsgemeinschaft (DFG, German Research Foundation) - Project-ID 281071066 - TRR 191.

\pdfbookmark[1]{References}{ref}
\LastPageEnding

\end{document}